\newenvironment{problem*}
  {
   \innerproblem}
  {\endinnerproblem}
\newenvironment{definition*}
  {
   \innerdefinition}
  {\endinnerdefinition}
\newtheorem{theorem}{Theorem}[section]
\newtheorem{lemma}[theorem]{Lemma}
\newtheorem{definition}[theorem]{Definition}
\newtheorem{claim}[theorem]{Claim}
\newtheorem{cor}[theorem]{Corollary}
\newtheorem{remark}{Remark}
\newtheorem{observation*}{Observation}[section]
\newtheorem{problem}{Problem}[section]
\newcommand{\ex}{\mathrm{ex}}
\definecolor{mypink}{RGB}{255, 105, 160}
\definecolor{myorange}{RGB}{255, 178, 49}
\definecolor{darktangerine}{rgb}{1.0, 0.66, 0.07}
\definecolor{darkpastelgreen}{rgb}{0.01, 0.75, 0.24}
\title{Counting cliques without generalized theta graphs}
\author{Jun Gao\thanks{Mathematics Institute and DIMAP,
            University of Warwick,
            Coventry, UK, and Extremal Combinatorics and Probability Group (ECOPRO), Institute for Basic Science (IBS),  Daejeon, South Korea. Supported by ERC Advanced Grant 101020255 and the Institute for Basic Science (IBS-R029-C4). E-mail: jungao@ibs.re.kr}
\and Zhuo Wu\thanks{Departament de Matemàtiques, Universitat Politècnica de Catalunya (UPC),
Carrer de Pau Gargallo 14, 08028 Barcelona, Spain. Z. Wu acknowledges the bilateral AEI+DFG research project PCI2024-155080-2: SRC-ExCo – Structure, Randomness and Computational Methods in Extremal Combinatorics, and the PID2023-147202NB-I00 (COCOA: COntemporary COmbinatorics and its Applications), 
all funded by MICIU/AEI/10.13039/501100011033. Email: \texttt{zhuo.wu@upc.edu}}
\and Yisai Xue\thanks{\textit{Corresponding author}. School of Mathematics and Statistics, Ningbo University, Ningbo, China and Extremal Combinatorics and Probability Group (ECOPRO),  Institute for Basic Science (IBS), Daejeon, South Korea. Supported by the Zhejiang Provincial Natural Science Foundation of China (No. LQN25A010024),  China Scholarship Council and the Institute for Basic Science (IBS-R029-C4). E-mail: xueyisai@nbu.edu.cn}}
\begin{document}

\maketitle

\begin{abstract}
 The \textit{generalized Tur\'an number} $\ex(n, T, F)$  is the maximum possible number of copies of $T$ in an $F$-free graph on $n$ vertices for any two graphs $T$ and $F$. 
 For the book graph $B_t$, there is a close connection between $\ex(n,K_3,B_t)$ and the Ruzsa-Szemer\'edi triangle removal lemma. 
 Motivated by this, in this paper, we study the generalized Tur\'an problem for generalized theta graphs, a natural extension of book graphs.
 Our main result provides a complete characterization of the magnitude of $\ex(n,K_3,H)$ when $H$ is a generalized theta graph, indicating when it is quadratic, when it is nearly quadratic, and when it is subquadratic. 
 Furthermore, as an application, we obtain the exact value of $\ex(n, K_r, kF)$, where $F$ is an edge-critical generalized theta graph, and $3\le r\le k+1$, extending several recent results.



 \end{abstract}

\section{Introduction}

  The \textit{Tur\'an number} of a graph $F$, denoted by $\mathrm{ex}(n, F)$, is the maximum possible number of edges in an $F$-free graph with $n$ vertices.
  The problem of determining Tur\'an number for assorted graphs traces its history back to 1907, when Mantel showed that $\mathrm{ex}(n, K_3)=\lfloor n^2/4\rfloor$. 
  In 1941, Tur\'an proved that if a graph does not contain a complete subgraph $K_{r+1}$, then the maximum number of edges it can contain is given by the \textit{Tur\'an graph} $T_r(n)$, i.e., the complete balanced $r$-partite graph on $n$ vertices. 

  Given two graphs $T$ and $F$, the \textit{generalized Tur\'an number} of $F$ is the maximum possible number of copies of $T$ in an $F$-free graph on $n$ vertices, denoted by $\mathrm{ex}(n, T, F)$. 
  Note that $\mathrm{ex}(n, K_2, F)=\mathrm{ex}(n, F)$. 
  The systematic study of $\mathrm{ex}(n, T, F)$ was initiated by Alon and Shikheman \cite{alon2016many}. 
  Among others, they bounded the maximum number of triangles in $C_{2\ell+1}$-free graphs, showing that
\begin{align*}
  \mathrm{ex}(n,K_3,C_{2\ell+1})\leq \frac{16(\ell-1)}{3} \mathrm{ex}(\lceil n / 2\rceil, C_{2\ell})=O(n^{1+1/\ell}).
\end{align*}
  For even cycles, F\"uredi and \"Ozkahya \cite{furedi20173} proved the following bound.
\begin{align*}
  \mathrm{ex}(n,K_3,C_{2\ell})\leq \frac{2(\ell-3)}{3} \mathrm{ex}(n, C_{2\ell})=O(n^{1+1/\ell}).
\end{align*}
  Since the work of Alon and Shikheman, there has been much research focusing on the generalized Tur\'an number of cycles, see e.g.  \cite{beke2023generalized}, \cite{gerbner2020generalized}, \cite{gishboliner2020generalized}.

  The \textit{book graph} $B_t$ is the graph consisting of $t\ge 2$ triangles, all sharing one common edge. 
  Alon and Shikheman \cite{alon2016many} also showed that 
\begin{align}\label{thm:book}
    n^{2-o(1)}\le \ex(n,K_3,B_t) = o(n^2). \tag{$\diamondsuit$}
\end{align}
 The upper bound is obtained by using the triangle removal lemma, and the lower bound is derived from the construction introduced by Ruzsa and Szemer\'edi \cite{ruzsa1978triple}, which is based on Behrend's method \cite{behrend1946sets} for creating dense subsets of the first $n$ integers without any three-term arithmetic progressions.
 It is conjectured \cite{1981On} that for any graph $F$, there exists a constant $\alpha$ such that $\ex(n,F)=\Theta(n^{\alpha})$, so (\ref{thm:book}) reveals the difference between Tur\'an number and generalized Tur\'an number.

In this paper, we will focus on the extremal function of the \textit{generalized theta graph}. More precisely, we make the following definition.

\begin{definition}
 Let $k\ge 2$ and $p_1$, $\ldots$, $p_k\ge 1$ be integers. 
 The \textit{generalized theta graph} $\Theta(p_1,\ldots,p_k)$ consists of a pair of end vertices  joined by $k$ internally disjoint paths of lengths $p_1$, $\ldots$, $p_k$, respectively.
\end{definition}

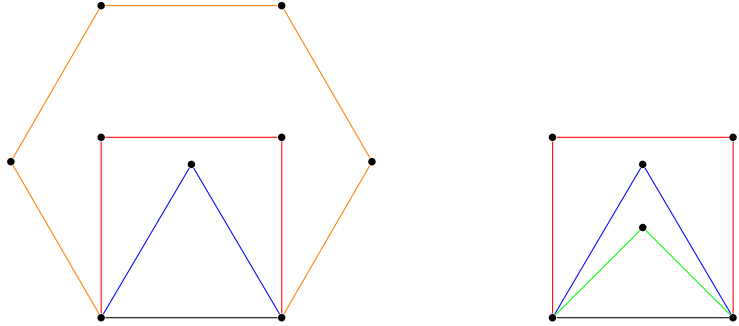
\begin{figure}[!ht]
    \centering   
\begin{tikzpicture}[scale=1.2]
\node[inner sep= 1pt](u) at (3,0)[circle,fill]{};
\node[inner sep= 1pt](v) at (5,0)[circle,fill]{};  
\draw[line width=0.901pt] (u) -- (v);

\node[inner sep= 1pt](w) at (4,1.7)[circle,fill]{};
\draw [blue,line width=0.901pt](u) -- (w);
\draw [blue,line width=0.901pt](w) -- (v);

\node[inner sep= 1pt](x1) at (3,2)[circle,fill]{};
\node[inner sep= 1pt](x2) at (5,2)[circle,fill]{};  
\draw [red,line width=0.901pt](u) -- (x1);
\draw [red,line width=0.901pt](x1) -- (x2);
\draw [red,line width=0.901pt](x2) -- (v);


\node[inner sep= 1pt](z1) at (2, 1.73)[circle,fill]{};
\node[inner sep= 1pt](z2) at (3, 3.46)[circle,fill]{}; 
\node[inner sep= 1pt](z3) at (5, 3.46)[circle,fill]{}; 
\node[inner sep= 1pt](z4) at (6, 1.73)[circle,fill]{}; 
\draw [orange,line width=0.901pt](u) -- (z1);
\draw [orange,line width=0.901pt](z1) -- (z2);
\draw [orange,line width=0.901pt](z2) -- (z3);
\draw [orange,line width=0.901pt](z3) -- (z4);
\draw [orange,line width=0.901pt](z4) -- (v);

\node[inner sep= 1pt](p) at (8,0)[circle,fill]{};
\node[inner sep= 1pt](q) at (10,0)[circle,fill]{};  
\draw[line width=0.901pt] (p) -- (q);
\node[inner sep= 1pt](p1) at (8,2)[circle,fill]{};
\node[inner sep= 1pt](p2) at (10,2)[circle,fill]{};
\node[inner sep= 1pt](p_3) at (9,1.7)[circle,fill]{};
\node[inner sep= 1pt](p_4) at (9,1)[circle,fill]{};
\draw [blue,line width=0.901pt](p) -- (p_3);
\draw [blue,line width=0.901pt](q) -- (p_3);
\draw [green,line width=0.901pt](p) -- (p_4);
\draw [green,line width=0.901pt](q) -- (p_4);
\draw [red,line width=0.901pt](p) -- (p1);
\draw [red,line width=0.901pt](p2) -- (p1);
\draw [red,line width=0.901pt](q) -- (p2);
\end{tikzpicture}
\caption{The generalized theta graphs $\Theta(1,2,3,5)$ and $\Theta(1,2,2,3)$.}\label{fig-theta}
\end{figure}

\vskip 0.5 em

 As the generalized theta graph is a common generalization of both cycles and book graphs, there has been much interest in studying the extremal problem for generalized theta graphs, e.g.  \cite{bukh2020turan}, \cite{conlon2019graphs}, \cite{faudree1983class}, \cite{liu2023turan}.
 
  Based on results for cycles and book graphs, a natural question is, what is the asymptotic order of $\ex(n, K_3, H)$ when $H$ is a generalized theta graph.
 Here is our first main result.

\begin{theorem}\label{thm:jun2}
    Let $H = \Theta(p_1,p_2,\cdots, p_k)$ be a generalized theta graph.
    \begin{enumerate}
        \item[\rm 1.] If $H$ contains at most one triangle, then there exists some constant $\alpha$ such that $\ex(n,K_3,H) = O(n^{2-\alpha}).$ 
        \item[\rm 2.] If $H$ contains at least two triangles and does not contain $\Theta(1,2,2,3)$ as a subgraph, then $n^{2-o(1)}\le \ex(n,K_3,H) = o(n^2)$.
        \item[\rm 3.] Otherwise we have $\ex(n,K_3,H) = \Theta(n^{2}).$
    \end{enumerate}
\end{theorem}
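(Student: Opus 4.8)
The plan is to first read off the trichotomy from $p_1,\dots,p_k$ and then treat the three regimes separately. The only triangles of $H=\Theta(p_1,\dots,p_k)$ are spanned by a path of length $1$ (the edge $uv$ between the two branch vertices) together with a path of length $2$; hence $H$ has at least two triangles iff exactly one $p_i$ equals $1$ and at least two equal $2$, i.e. $B_2=\Theta(1,2,2)\subseteq H$. Moreover any $u$--$v$ path inside a generalized theta graph is either the edge $uv$ or a full $p_i$-path, and the branch vertices of $\Theta(1,2,2,3)$ have degree $4$, so $\Theta(1,2,2,3)\subseteq H$ iff the multiset $\{p_1,\dots,p_k\}$ contains $1$, contains $2$ with multiplicity $\ge 2$, and contains $3$. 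Thus Case~1 is ``$H$ has at most one triangle''; Case~2 is ``$B_2\subseteq H$ but no $p_i=3$'', so $H=\Theta(1,2,\dots,2,q_1,\dots,q_m)$ with $b\ge 2$ twos and all $q_j\ge 4$; and Case~3 is ``$\{1,2,2,3\}\subseteq\{p_i\}$''. I will repeatedly use the identity $3\,t(G)=\sum_v e(G[N(v)])=\sum_{e\in E(G)}d(e)$, where $t(G)$ is the number of triangles of $G$ and $d(e)$ the codegree of $e$. For Case~1: if $H=K_3$ then $\ex(n,K_3,H)=0$, and otherwise one checks that some pair $i\ne j$ has $p_i+p_j\ge 4$; picking $g$ minimal with this property and concatenating the two internally disjoint paths gives $C_g\subseteq H$, so $\ex(n,K_3,H)\le\ex(n,K_3,C_g)=O(n^{2-\alpha})$ for some $\alpha=\alpha(g)>0$ by the known polynomially subquadratic bounds on triangles in $C_g$-free graphs (Alon--Shikhelman for odd $g$, Füredi--Özkahya and the classical $C_4,C_6$ estimates for even $g$).

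For Case~3 I will prove the stronger statement $\ex(n,K_3,\Theta(p_1,\dots,p_k))=O(n^2)$ for \emph{every} generalized theta graph. Suppose $G$ is $H$-free with $t(G)\ge Cn^2$ for a large constant $C=C(H)$. By the identity some vertex $v$ has $e(G[N(v)])\ge 3Cn$, so $G[N(v)]$ contains a subgraph on a set $S\subseteq N(v)$ of minimum degree at least $3C$, which I take to exceed $\sum_i p_i$. Greedily build in $G[S]$ a spider centred at a vertex $w$ with internally disjoint legs of lengths $p_i-1$ over those $i$ with $p_i\ge 2$ (at each step the current endpoint has $\ge 3C$ neighbours in $S$, all but boundedly many unused). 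Since $v$ is adjacent to all of $S$, prepending $v$ to each leg --- and adding the edge $vw$ for the $p_i=1$ path, if present --- yields internally disjoint $v$--$w$ paths of lengths $p_1,\dots,p_k$, i.e. a copy of $H$; contradiction. Hence $t(G)<Cn^2$. For the lower bound let $G$ be the complete bipartite graph with parts $X,Y$ of sizes $\lceil n/2\rceil$ and $\lfloor n/2\rfloor$ plus a matching of size $\lfloor|X|/2\rfloor$ inside $X$: its triangles are exactly $\{x,x',y\}$ with $xx'$ a matching edge and $y\in Y$, so $t(G)=\Omega(n^2)$, and $G$ is $\Theta(1,2,2,3)$-free (a matching edge $xx'$ has common neighbourhood $Y$, but any $x$--$x'$ path of length $3$ must re-enter $X$ through a vertex of $N(x')\cap X=\{x\}$; an $X$--$Y$ edge has only one common neighbour). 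As $\Theta(1,2,2,3)\subseteq H$, $G$ is $H$-free, so $\ex(n,K_3,H)=\Theta(n^2)$.

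For Case~2 the lower bound is immediate: $B_b=\Theta(1,2,\dots,2)\subseteq H$, so every $B_b$-free graph is $H$-free; the Ruzsa--Szemerédi construction behind \eqref{thm:book} is $B_2$-free, hence $B_b$-free, hence $H$-free, with $n^{2-o(1)}$ triangles, giving $\ex(n,K_3,H)\ge n^{2-o(1)}$. The upper bound $\ex(n,K_3,H)=o(n^2)$ is the main obstacle. The plan is a reduction to the book case: fix a large constant $D=D(H)\ge b+\sum_j(q_j-1)$ and split the edges of an $H$-free graph $G$ into \emph{light} edges ($d(e)<D$) and \emph{heavy} edges ($d(e)\ge D$). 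The subgraph spanned by the light edges is $B_D$-free, so by \eqref{thm:book} it carries only $o(n^2)$ triangles, and it suffices to bound the triangles on the heavy edges. Here the hypothesis $\Theta(1,2,2,3)\not\subseteq H$ enters crucially: because $H$ has no path of length $3$, a large book on an edge $uv$ cannot be completed to a copy of $H$ by attaching ``short detours'', so $H$-freeness and the choice of $D$ force that a heavy edge $uv$ admits no family of $m$ internally disjoint $u$--$v$ paths of lengths $q_1,\dots,q_m$. For $m=1$ this already makes the common neighbours of $uv$ pairwise (almost) non-adjacent, with neighbourhoods outside $\{u,v\}$ essentially disjoint, hence of total degree $O(n)$; a counting argument exploiting this dispersion --- classifying common neighbours of heavy edges by degree, and using that low-degree common neighbours are shared between very few heavy edges --- should push the count of triangles on heavy edges down to $o(n^2)$.

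The step I expect to be genuinely delicate is exactly this last estimate. One must make the ``no medium-length closing path'' rigidity effective for all $q_j\ge 4$ (the forbidden closing path then has length $q_j$, not just $4$) and for $m\ge 2$ (where only the joint non-existence of $m$ internally disjoint paths is available, a weaker hypothesis), and one must improve the dispersion count from the easy bound $O(n^2)$ to $o(n^2)$; I anticipate this will require either iterating the light/heavy threshold split or invoking a Ruzsa--Szemerédi--type $(6,3)$ removal argument for the triangle hypergraph restricted to the heavy edges, which is morally where the gain from $O(n^2)$ to $o(n^2)$ has to come from.
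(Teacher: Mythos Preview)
Your Case~3 is fine and in fact a bit slicker than the paper's: instead of invoking the general $2$-tree lemma, you find a vertex $v$ with $e(G[N(v)])\ge 3Cn$, pass to a subgraph of $G[N(v)]$ of minimum degree exceeding $\sum p_i$, and greedily grow a spider there; prepending $v$ to each leg gives the theta graph. The lower-bound construction and its $\Theta(1,2,2,3)$-freeness check also match the paper's.

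Case~1, however, contains a fatal error of direction. From $C_g\subseteq H$ you deduce $\ex(n,K_3,H)\le\ex(n,K_3,C_g)$, but monotonicity goes the other way: if $F_1\subseteq F_2$ then every $F_1$-free graph is $F_2$-free, so $\ex(n,K_3,F_1)\le\ex(n,K_3,F_2)$. Thus $C_g\subseteq H$ only gives the useless inequality $\ex(n,K_3,C_g)\le\ex(n,K_3,H)$. (You use the correct direction yourself for the lower bounds in Cases~2 and~3.) There is no cheap fix: an $H$-free graph may be full of $C_g$'s, so one cannot reduce to the cycle problem. The paper's argument is genuinely different: a structural lemma assigns to each triangle one of its edges so that every edge is hit boundedly often; a random bipartition then converts $\Omega(n^{2-\alpha})$ triangles into that many edges of an auxiliary bipartite graph, K\H{o}v\'ari--S\'os--Tur\'an produces a $K_{t^3,t^3}$, and inside it (together with the recorded apex vertices) one builds the internally disjoint paths by hand, with a small Ramsey-type claim (the paper's \cref{cl:color}) handling whether the apexes can be taken coincident or distinct.

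For Case~2 your lower bound is correct, but your upper-bound sketch is incomplete, and the anticipated ``removal on the heavy-edge triangle hypergraph'' is essentially just the book bound again --- it does not exploit $H$. The missing idea is a sharpening of the triangle-to-edge assignment: the paper assigns each triangle to \emph{two} of its edges with bounded multiplicity, separates off the triangles whose third edge has low codegree (these are $o(n^2)$ by the book bound), tripartites the rest, and shows via K\H{o}v\'ari--S\'os--Tur\'an plus a double count that one can locate a $K_{t^3,t^3}$ in which two vertices on the same side are already joined by a heavy edge of $G$. That extra edge supplies the length-$1$ path of $H$, the bipartite host supplies the even paths, and apex triangles on a matching supply the parity flips needed for the odd $q_j\ge 5$ paths --- crucially never a path of length~$3$, which is exactly where the hypothesis $\Theta(1,2,2,3)\not\subseteq H$ is used.
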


\begin{remark}
   \cref{thm:jun2} provides a complete characterization of the magnitude of $\ex(n,K_3,H)$.
  That is, we determine when $\ex(n,K_3,H)$ is subquadratic, when it is nearly quadratic, and when it is quadratic.
\end{remark}

\begin{remark}
   Combining the results of \cref{lem:k-tree}(2) proved in \cref{sec3}, we extend the results for $K_3$ to $K_r$ by showing that $\ex(n,K_r,H) = O(\ex(n,K_3,H))$, where $H$ is a generalized theta graph.
\end{remark}

 Denote $kF$ as the union of $k$ disjoint copies of $F$. In \cite{alon2016many}, Alon and Shikhelman observed an interesting phenomenon that while $\ex(n,F)$ and $\ex(n, kF)$ are close to each other, the number of copies of other graphs can be far from each other if we forbid $F$ or $kF$. 
 This phenomenon was further examined by Gerbner, Methuku and Vizer \cite{gerbner2019generalized}, and they proved that $\ex(n,K_r,kF)=O(\max\limits_{m\le r}\ex(n,K_m,F))$.
 Furthermore, they showed that

 \begin{itemize}
  \item  If $r \leq k$, then $\ex(n, K_r, k C_{2\ell+1})=\Theta(n^2)$.
  \item If $r>k+1$, then $\ex(n, K_r, k C_{2\ell+1})=O(n^{1+1 / \ell})$.
\end{itemize}

 In addition to determining the asymptotic order of $\mathrm{ex}(n, K_r, k C_{2\ell+1})$, Gerbner, Methuku and Vizer \cite{gerbner2019generalized} also suggested determining its exact value.
 The first progress in this direction is that Zhang, Chen, Gy\H{o}ri and Zhu \cite{zhang2023maximum} obtained the exact value for $\mathrm{ex}(n, K_3, 2 C_{5})$.
 Subsequently, Hou, Yang and Zeng \cite{hou2023counting} provided the exact value for $\mathrm{ex}(n, K_3, k C_{2\ell+1})$.

  Our second result shall extend the above results. 
  A graph is \textit{edge-critical} if there exists an edge whose deletion reduces its chromatic number.
 Simonovits \cite{simonovits1968method} showed that  $T_r(n)$  is the unique graph which attains the maximum number of edges in an $n$-vertex $F$-free graph when $F$ is an edge-critical graph with $\chi(F)=r+1>2$.

 It is easy to observe that a generalized theta graph is edge-critical if and only if the length of all of its paths have the same parity except exactly one of them, and all the edge-critical generalized theta graphs have chromatic number 3.
 
  Here is our second main result.

\begin{theorem}\label{thm:main}
  Let $k\geq 2$ and $n$ be sufficiently large.
  Assume that $F$ is an edge-critical generalized theta graph. 
\begin{enumerate}
   \item[\rm 1.] For $3\leq r\le k+1$,
\begin{align*}
  \ex(n, K_r, kF)
  =\binom{k-1}{r} + \binom{k-1}{r-1}(n-k+1) + \binom{k-1}{r-2}\lfloor(n-k+1)^2/4\rfloor.
\end{align*}
   \item[\rm 2.] For $r\ge k+2$, $$\ex(n,K_r,kF)=o(n^2).$$
\end{enumerate}
\end{theorem}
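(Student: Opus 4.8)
The plan is as follows. For the lower bound in part~(1) I would use the graph $G^{\ast}$ consisting of a clique on a set $A$ of $k-1$ vertices joined completely to a balanced complete bipartite graph $K_{\lfloor (n-k+1)/2\rfloor,\,\lceil (n-k+1)/2\rceil}$ on the remaining $n-k+1$ vertices. Since an edge-critical generalized theta graph is non-bipartite, every copy of $F$ in $G^{\ast}$ meets $A$, so $G^{\ast}$ contains no $k$ pairwise disjoint copies of $F$. Each $K_r$ in $G^{\ast}$ has at most two vertices outside $A$ (that part is triangle-free), and splitting such a clique into its $j$ vertices in $A$ and $r-j\in\{0,1,2\}$ vertices outside counts exactly $\binom{k-1}{r}+\binom{k-1}{r-1}(n-k+1)+\binom{k-1}{r-2}\lfloor (n-k+1)^2/4\rfloor$ cliques. (Part~(2) has no matching construction --- only the trivial bound $\ex(n,K_r,kF)\ge 1$ from a $(|V(F)|-1)$-clique --- so there the content is purely the $o(n^2)$ upper bound.)

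For the upper bounds I would argue by strong induction on $k$. Fix $r$, write $\mathcal N(H,G)$ for the number of copies of $H$ in $G$, and let $G$ be a $kF$-free $n$-vertex graph ($n$ large) with $\mathcal N(K_r,G)$ maximum. One ingredient throughout is that $\ex(m,K_3,F)=o(m^2)$, which follows from \cref{thm:jun2} once one observes that an edge-critical generalized theta graph never contains $\Theta(1,2,2,3)$: since every path joining the two hubs of a theta graph runs along a single constituent path, such a subgraph would force four of the parameters $p_i$ to equal $1,2,2,3$ --- two odd and two even --- whereas edge-criticality of $\Theta(p_1,\dots,p_k)$ means exactly one $p_i$ differs in parity from the rest; one similarly obtains $\ex(m,K_s,F)=o(m^2)$ for all $s\ge 3$.

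\emph{Part~(1).} Here $\mathcal N(K_r,G)$ is at least the quadratic value above, and the crux is to deduce from this near-extremality the existence of a vertex $v$ for which $G-v$ is $(k-1)F$-free; I would establish this by a stability argument, using the subquadratic bounds together with the stability form of Simonovits' theorem~\cite{simonovits1968method} for the edge-critical Tur\'an problem to pin $G$ down to $G^{\ast}$ up to $o(n^2)$ edge changes, which exhibits such a $v$. Granting it, both $G-v$ and $G[N(v)]\subseteq G-v$ are $(k-1)F$-free, so $\mathcal N(K_r,G)=\mathcal N(K_r,G-v)+\mathcal N(K_{r-1},G[N(v)])\le \ex(n-1,K_r,(k-1)F)+\ex(n-1,K_{r-1},(k-1)F)$. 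For $3\le r\le k$ the induction hypothesis evaluates both terms exactly (for $r=3$ the $K_{r-1}$-term is the ordinary Tur\'an number $\ex(\cdot,(k-1)F)$), and Pascal's identity $\binom{k-1}{j}=\binom{k-2}{j}+\binom{k-2}{j-1}$ collapses the sum into precisely the claimed formula at level $k$. For $r=k+1$ the first term is only known to be $o(n^2)$ (part~(2) at level $k-1$), so the telescoping gives the formula up to an additive $o(n^2)$, which I would remove by the usual stability-to-exactness step: the estimate already forces $G-v$ to be within $o(n^2)$ edges of a balanced complete bipartite graph with $v$ near-complete to it, and a short local edge-switching argument constrained by $kF$-freeness then forbids beating $G^{\ast}$.

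\emph{Part~(2).} Now $r\ge k+2$, and I would argue directly. Let $\mathcal F$ be a maximum family of pairwise disjoint copies of $F$ in $G$, with vertex set $W$ (of constant size), so $G-W$ is $F$-free. A copy of $K_r$ using at most $r-3$ vertices of $W$ contains a $K_s$ with $s\ge 3$ inside $G-W$, so over the $O(1)$ choices of its $W$-part these total $O(\ex(n,K_s,F))=o(n^2)$. A copy of $K_r$ using at least $r-2\ge k$ vertices of $W$ contains a $k$-clique $Q\subseteq W$, and the number of $K_r$'s through a fixed such $Q$ equals $\mathcal N(K_{r-k},G[N(Q)])$; here $G[N(Q)]$ is $F$-free, and by a K\H{o}v\'ari--S\'os--Tur\'an argument it has only $o(n^2)$ edges (a large complete bipartite subgraph would let one embed $k$ pairwise disjoint copies of $F$ rooted at the vertices of $Q$, using that $F$ minus a suitable vertex is bipartite since $F$ is edge-critical, contradicting $kF$-freeness), so $\mathcal N(K_{r-k},G[N(Q)])=o(n^2)$. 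Summing both cases gives $\mathcal N(K_r,G)=o(n^2)$. The step I expect to be the main obstacle is the structural claim in Part~(1): extracting from near-extremality a single vertex witnessing $kF$-freeness genuinely requires both $\ex(n,K_3,F)=o(n^2)$ and stability, since without the extremal hypothesis it can fail --- e.g.\ for $F=K_3$ the $2F$-free graph $K_5\cup K_{\lfloor (n-5)/2\rfloor,\lceil (n-5)/2\rceil}$ admits no vertex whose deletion destroys every triangle.
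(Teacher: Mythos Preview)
Your architecture differs from the paper's --- you propose induction on $k$ whereas the paper argues directly --- but both routes hinge on the same structural facts, and that is where your proposal has genuine gaps.

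\textbf{The structural claim in Part (1).} You assert that an extremal $kF$-free $G$ contains a vertex $v$ with $G-v$ being $(k-1)F$-free, to be extracted ``by a stability argument.'' This is the heart of the proof, and a one-line appeal to stability is not an argument: knowing that $G$ is within $o(n^2)$ edges of $K_{k-1}\vee T_2(n-k+1)$ does not by itself produce such a $v$, since the copies of $F$ live in that $o(n^2)$ noise and could a priori be arranged so that no single vertex meets enough of them (your own $K_5$ example illustrates the obstruction). The paper proves a closely related statement --- that there is a set $U$ of exactly $k-1$ vertices with $G-U$ being $F$-free, which by pigeonhole implies your claim for any $v\in U$ --- but this takes real work: a minimum-degree stability lemma (\cref{lmm:edge-critical}), and a classification of a minimum $F$-transversal into ``expanding'' and ``non-expanding'' vertices according to whether their neighbourhood in $G-U$ contains many disjoint copies of the tree $F-\{\text{root}\}$ (\cref{cl:11111}--\cref{cl:emptyset}), using \cref{cor:tree}. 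None of this is in your outline.

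\textbf{The case $r=k+1$ never closes.} You acknowledge that the recursion yields only the formula plus $o(n^2)$ when $r=k+1$, deferring to a ``stability-to-exactness step.'' But at the base $k=2$ the \emph{only} case is $r=3=k+1$, so your induction never produces a single exact value without that step --- and the step is precisely the content of \cref{lmm:theta}, which shows $e(G')+c\,\mathcal N_r(G')\le\lfloor |G'|^2/4\rfloor$ for $F$-free $G'$ with equality only at $T_2$. Proving that inequality already requires \cref{thm:jun2}, a minimum-degree boost, a book-freeness bound, and showing the residual graph is triangle-free; ``local edge-switching'' does not substitute for it. Note also that your $r=3$ step presupposes the exact value of $\ex(m,(k-1)F)$, which is an external ingredient the paper does not assume.

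\textbf{Part (2).} Your claim that $G[N(Q)]$ is $F$-free is unjustified: a single $F$ in the common neighbourhood of a $k$-clique does not yield $kF$. What your parenthetical actually shows is that $G[N(Q)]$ is $K_{t,t}$-free for some constant $t$, hence has $O(n^{2-1/t})$ edges; this handles $r-k\le 2$, but for $r-k\ge 3$ you still need $\mathcal N_{r-k}(G[N(Q)])=o(n^2)$, which $K_{t,t}$-freeness alone does not give. The clean fix is to decompose by the full $W$-part of the clique rather than a $k$-sub-clique: a $K_r$ with exactly $r-2$ vertices in $W$ has its two outside vertices forming an edge in the common neighbourhood of that $(r-2)$-set, to which the $K_{t,t}$-freeness argument applies. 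The paper sidesteps all of this by working with the set $U_1$ of size at most $k-1$ rather than with $W$, so that every $K_r$ automatically has at least three vertices in the $F$-free part and \cref{thm:jun2} with \cref{lem:k-tree} applies directly.
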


Note that both odd cycles and book graphs are edge-critical generalized theta graphs.
\cref{thm:main} provides the exact values of $\ex(n, K_r, k C_{2\ell+1})$ and $\ex(n, K_r, k B_t)$ for $3\le r\le k+1$.

\section{Definitions and some auxiliary results}

\textbf{Notation.} All the graphs in this paper are simple graphs. Let $G$ be a graph. We will denote the set of vertices of $G$ by $V(G)$ and the set of edges by $E(G)$, and define $|G| := |V(G)|$ and $e(G):=|E(G)|$. For any $v\in V(G)$, let $N_G(v)$ be the set of the neighbours of $v$ in $G$, $d_G(v) :=|N_G(v)|$, and $\delta(G) :=\min\limits_{v\in V(G)} d_G(v)$. For any two graphs $G_1$ and $G_2$, let $G_1\vee G_2$ denote the graph obtained from $G_1 \cup G_2$ by adding all edges between $V(G_1)$ and $V(G_2)$. 
  Let $U\subseteq V(G)$ and define $G-U$ be $G[V(G)\backslash U]$.

 We use $[n]$ to  denote the set $\{1,2,\cdots,n\}$, and $K_n$, $C_n$, $P_n$ to denote the complete graph (clique), cycle, and path on $n$ vertices respectively.
 For any graph $G$, let $\mathcal{K}_{t}(G)$ be the collection of the $t$-cliques in $G$.
 For a generalized theta graph $\Theta(p_1,\ldots,p_k)$, we refer to the common end vertices of the $k$ paths as the \textit{root vertices}.
 We call the degree-2 vertices of a book graph the \textit{page vertices}.
 A \textit{face} is a connected region of the plane bounded by edges of the planar embedding of the graph.
 The \textit{boundary} of a face is the closed walk in $G$ bounding it, which is a cycle if the graph is simple and 2-connected.

  Let $f,g$ be two functions on $\mathbb{N}$. Define $f(n)=O(g(n))$ if there exists a constant $c$ such that for sufficiently large $n$, $f(n)<cg(n)$, and define $f(n)=o(g(n))$ if for any $\varepsilon>0$, $f(n)<\varepsilon g(n)$ for sufficiently large $n$.

\vspace*{0.2cm}
  
A \textit{$k$-tree} is a generalization of a tree that has the following recursive construction.
\begin{definition}[$k$-tree, Beineke and Pippert \cite{beineke1969number}]\label{ktree}
  Let $k$ be a fixed positive integer.
  \begin{enumerate}
    \item  The complete graph $K_k$ is a $k$-tree.
    \item If $T$ is a $k$-tree, then so is the graph obtained from $T$ by joining a new vertex to all vertices of some $k$-clique of $T$.
  \end{enumerate}
\end{definition}

Here are some auxiliary results that are needed in this paper.

\begin{theorem}[K{\H{o}}v{\'a}ri, S{\'o}s and Tur{\'a}n \cite{kHovari1954problem}]\label{thm:KST}
  For $2\le a\le b$,
\begin{align*}
   \mathrm{ex}(n, K_{a,b})=O(n^{2-1/a}).
\end{align*}
\end{theorem}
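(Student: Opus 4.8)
The plan is to prove the bound by a double-counting argument on \emph{stars}: I count pairs $(v,S)$ in which $v$ is a vertex and $S$ is an $a$-element set of vertices all lying in the neighbourhood of $v$. Fix a $K_{a,b}$-free graph $G$ on $n$ vertices with $m$ edges, and let $\mathcal{P}$ be the number of such pairs. Counting by the centre $v$, each vertex contributes exactly $\binom{d(v)}{a}$ choices of $S\subseteq N(v)$, so that $\mathcal{P}=\sum_{v\in V(G)}\binom{d(v)}{a}$. The whole proof rests on estimating this quantity from two sides.

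For the upper bound I would recount $\mathcal{P}$ according to the $a$-set $S$. A pair $(v,S)$ with $S\subseteq N(v)$ records that $v$ is a common neighbour of every vertex in $S$. If some $a$-set $S$ had $b$ or more common neighbours, then $S$ together with $b$ of those neighbours would span a copy of $K_{a,b}$, contradicting $K_{a,b}$-freeness. Hence every $a$-set has at most $b-1$ common neighbours, and summing over all $a$-sets gives $\mathcal{P}\leq (b-1)\binom{n}{a}$.

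Combining the two expressions yields $\sum_{v}\binom{d(v)}{a}\leq (b-1)\binom{n}{a}$. To convert this into a bound on $m$ I would apply Jensen's inequality to the convex map $x\mapsto\binom{x}{a}$, obtaining $\sum_{v}\binom{d(v)}{a}\geq n\binom{\bar d}{a}$, where $\bar d=2m/n$ is the average degree. One may assume $\bar d\geq a$, since otherwise $m< an/2=O(n)$ already meets the claimed bound. This reduces the whole problem to the single inequality $n\binom{2m/n}{a}\leq(b-1)\binom{n}{a}$, which, after bounding $\binom{2m/n}{a}$ below by a constant multiple of $(2m/n-a)^a$ and $\binom{n}{a}$ above by $n^a/a!$, rearranges and takes an $a$-th root to give $m=O\!\left(b^{1/a}\,n^{2-1/a}\right)$, as required.

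The only place demanding genuine care is the convexity step: I need $x\mapsto\binom{x}{a}$ (extended to real $x$ via the falling factorial) to be convex on the range $x\geq a-1$ where the degrees live, so that Jensen applies, and I need clean elementary estimates comparing $\binom{2m/n}{a}$ to $(2m/n)^a$ and $\binom{n}{a}$ to $n^a$ so that the $a$-th root can legitimately be extracted with the dependence on $a$ and $b$ absorbed into the constant. None of this is deep, but it is the one point at which lower-order terms and the small-average-degree case must be handled explicitly rather than waved through.
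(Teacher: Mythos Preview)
The paper does not supply its own proof of this statement: it is quoted as a classical auxiliary result and attributed to K\H{o}v\'ari, S\'os and Tur\'an with a citation. Your proposal is exactly the standard double-counting proof of the K\H{o}v\'ari--S\'os--Tur\'an theorem, and it is correct; the only delicate point you flag (convexity of $x\mapsto\binom{x}{a}$ and the handling of small degrees) is indeed the one place requiring care, and the usual fix---replacing $\binom{x}{a}$ by $\max\{0,\binom{x}{a}\}$, which is convex on all of $\mathbb{R}$---resolves it cleanly.
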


\begin{theorem}[Simonovits \cite{simonovits1968method}]\label{thm:simonovits}
  Let $F$ be an edge-critical graph with $\chi(F)=r+1>2$ and $n$ be sufficiently large. 
  Then the Tur\'an graph $T_r(n)$  is the unique graph which attains the maximum number of edges in an $n$-vertex $F$-free graph.
\end{theorem}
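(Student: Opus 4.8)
The final statement is \cref{thm:main}, so I must prove both the exact count for $3\le r\le k+1$ and the $o(n^2)$ bound for $r\ge k+2$, where $F$ is an edge-critical generalized theta graph. Write $F=\Theta(p_1,\dots,p_m)$ with $\chi(F)=3$. Since a generalized theta graph is edge-critical iff it has exactly one odd path (and the rest of one parity, the opposite), $F$ has an edge whose removal makes it bipartite. The plan is to identify $G^\ast := K_{k-1}\vee T_2(n-k+1)$ as the extremal configuration: it is $kF$-free because deleting $k-1$ vertices from any $kF$-subgraph leaves at least one copy of $F$ inside $T_2(n-k+1)$, but $T_2$ is bipartite so it cannot contain the odd path of $F$, a contradiction. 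Counting $K_r$'s in $G^\ast$ gives exactly the claimed expression: choose $a$ vertices from the dominating $K_{k-1}$ and $r-a$ from $T_2(n-k+1)$, where a $K_{r-a}$ inside the bipartite Turán graph requires $r-a\le 2$, yielding the three binomial terms with $a=r,\,r-1,\,r-2$. This handles the lower bound for the first part.

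For the matching upper bound (the heart of the theorem), I would first invoke the Gerbner–Methuku–Vizer principle $\ex(n,K_r,kF)=O(\max_{s\le r}\ex(n,K_s,F))$ together with \cref{thm:jun2} (which is proved earlier and applies since $F$ is a single generalized theta graph) to control the order of magnitude; in particular, since an edge-critical $F$ with at least two triangles must contain $\Theta(1,2,2,3)$ only in degenerate cases, one checks that $\ex(n,K_3,F)=\Theta(n^2)$ exactly when $F$ is quadratic, matching the leading term $\binom{k-1}{r-2}\lfloor(n-k+1)^2/4\rfloor$. The exact upper bound then proceeds by a stability-plus-extraction argument: take any $kF$-free graph $G$ on $n$ vertices maximizing the number of $K_r$'s. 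I would argue that almost all $K_r$-copies must be ``anchored'' on a small dominating set of at most $k-1$ vertices of very high degree, then peel these vertices off and apply \cref{thm:simonovits} (Simonovits's edge-critical stability) to the remainder, which must be essentially $F$-free and hence structurally close to $T_2$. Progressively cleaning up, one forces $G\subseteq G^\ast$ up to the extremal configuration, giving the exact count.

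For the second part ($r\ge k+2$), the claim is merely $\ex(n,K_r,kF)=o(n^2)$, which is much softer. Here I would again use $\ex(n,K_r,kF)=O(\max_{s\le r}\ex(n,K_s,F))$ and observe that every $K_s$ for $s\ge 4$ already contains many copies of small theta/book subgraphs; combined with the fact that any $K_r$ with $r\ge k+2$ forces enough disjoint copies of $F$ (since $F$ embeds in a bounded clique, and $r$ large produces $k$ vertex-disjoint such embeddings once cliques are abundant), one shows that a graph with $\omega(n^2)$ copies of $K_r$ cannot be $kF$-free. More directly, since $F$ contains a triangle, $\ex(n,K_r,F)\le\ex(n,K_r,K_3^{\text{-pattern}})$-type bounds degrade below $n^2$, and the extremal $G^\ast$ above contains \emph{no} $K_r$ at all once $r\ge k+2$ (a clique needs at least $r-2$ vertices from $T_2$, impossible for $r-2\ge 3$ plus only $k-1<r-1$ dominating vertices), so the true extremal number is genuinely subquadratic.

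\textbf{Main obstacle.} The difficult step is the exact upper bound in part~(1): showing that the optimal $kF$-free graph is \emph{precisely} $K_{k-1}\vee T_2(n-k+1)$ and not merely within $o(n^2)$ of it. The challenge is a progressive/localized application of Simonovits's stability theorem after removing a dominating set whose existence and size ($=k-1$) must be established rigorously — one must rule out configurations where the $K_r$-copies are spread out rather than concentrated on few vertices, and control error terms tightly enough to pin down the exact binomial constants rather than just the leading asymptotics.
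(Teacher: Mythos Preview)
You have misidentified the target. The statement you were asked to address is \cref{thm:simonovits}, Simonovits's classical 1968 result on edge-critical graphs. The paper does \emph{not} prove this theorem at all; it is quoted from the literature as an auxiliary tool (alongside \cref{thm:KST} and \cref{Erdos-Simon}) and then invoked inside the proofs of \cref{lmm:edge-critical}, \cref{lmm:theta}, and \cref{thm:main}. Your entire proposal is instead a sketch of \cref{thm:main}, so as a response to the stated task it is simply off-target.

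Setting that aside and reading your sketch as an attempt at \cref{thm:main}, the overall architecture (lower bound via $K_{k-1}\vee T_2(n-k+1)$; for the upper bound, strip off a set of $k-1$ ``anchor'' vertices and analyse the $F$-free remainder) does match the paper. However, you miss the crucial engine that converts the stability picture into an \emph{exact} count. The paper does not merely apply \cref{thm:simonovits} to $G'=G-U$ to bound $e(G')$; that alone would leave uncontrolled lower-order clique terms $\mathcal{N}_{r-i}(G')$ for $i\le r-3$. Instead it proves a dedicated lemma (\cref{lmm:theta}): for any $F$-free $G$ and any constant $c$, one has $e(G)+c\,\mathcal{N}_r(G)\le \lfloor n^2/4\rfloor$ with equality only at $T_2(n)$. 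This is what kills the $o(n^2)$ error in one stroke and pins down the extremal graph. Your proposal offers no mechanism for this step beyond ``progressively cleaning up,'' which is exactly the obstacle you yourself flag. A secondary inaccuracy: your remark that ``an edge-critical $F$ with at least two triangles must contain $\Theta(1,2,2,3)$ only in degenerate cases'' is backwards --- an edge-critical generalized theta graph \emph{never} contains $\Theta(1,2,2,3)$, so by \cref{thm:jun2} one always has $\ex(n,K_3,F)=o(n^2)$, which is precisely what makes \cref{lmm:theta} go through.
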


\begin{theorem}[Erd\H{o}s-Simonovits \cite{erdos1966limit}, F\"uredi \cite{furedi2015proof}]\label{Erdos-Simon}
  For any $\varepsilon>0$ and a graph $H$ with $\chi(H)=r+1$, there exists $n_0=n_0(\varepsilon, H)$ such that the following holds for all $n \geq n_0$. Let $G$ be an $n$-vertex $H$-free graph. If
\begin{align*}
  e(G) \geq\left(1-\frac{1}{r}-\varepsilon\right)\binom{n}{2}
\end{align*}
then $G$ can be obtained from $T_r(n)$ by adding and deleting a set of at most $8 \sqrt{\varepsilon} n^2$ edges.
\end{theorem}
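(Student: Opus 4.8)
The statement is a stability result, so the plan is to extract from the near-extremal edge count an explicit partition of $V(G)$ that differs from the one realizing $T_r(n)$ in only $O(\sqrt{\varepsilon})n^2$ edges. I would fix a partition $V(G)=V_1\cup\cdots\cup V_r$ \emph{maximizing} the number of crossing edges and write $b$ for the number of internal edges (those inside some $V_i$). A uniformly random $r$-partition already has expected crossing ratio $1-\tfrac1r$, so $b\le e(G)/r$ unconditionally, and maximality gives the local inequality $d_{V_i}(v)\le d_{V_j}(v)$ for every $v\in V_i$ and every $j$. The whole task is to upgrade the crude bound to $b=O(\sqrt{\varepsilon})n^2$, and then to deduce that the crossing graph is nearly complete and the classes nearly balanced.

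The only point at which $H$-freeness is used is the bound on $b$, and this is the step I expect to be the main obstacle. Suppose $b\ge\beta n^2$; then some class, say $V_1$, carries $\Omega(n^2)$ internal edges, so restricting to the vertices of $V_1$ of high internal degree --- which by the local inequality also have many neighbours in every other class --- and applying \cref{thm:KST} with $h=|V(H)|$ produces a copy of $K_{h,h}$ inside $V_1$. I would then try to complete this $K_{h,h}$ to a complete $(r+1)$-partite graph $K_{(r+1)\times h}$ by greedily selecting $h$-subsets of the remaining classes inside the relevant common neighbourhoods; since $H\subseteq K_{(r+1)\times h}$, such a completion yields a copy of $H$, contradicting $H$-freeness and forcing $b=O(\sqrt{\varepsilon})n^2$. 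The delicate issue is that the completion needs the other classes to be pairwise dense and densely joined to $V_1$, a fact that itself follows from near-extremality only once the internal edges and the class imbalance are known to be small --- which is what we are proving. Breaking this circularity requires a bootstrapping argument that starts from the crude bound $b\le e(G)/r$ and iteratively sharpens the density, balancedness, and internal-edge estimates; this interplay is the genuine heart of the theorem.

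Once $b=O(\sqrt{\varepsilon})n^2$ is established, the remainder is bookkeeping. Writing $K$ for the complete $r$-partite graph on $V_1,\ldots,V_r$, the crossing edges of $G$ number $e(G)-b\le e(K)\le\left(1-\tfrac1r\right)\binom{n}{2}$, the last inequality being tight only at balance. Comparing with $e(G)\ge\left(1-\tfrac1r-\varepsilon\right)\binom{n}{2}$ bounds both the number of absent crossing edges, namely $e(K)-\bigl(e(G)-b\bigr)$, and the excess of $\sum_i\binom{|V_i|}{2}$ over its balanced minimum, each by $O(\sqrt{\varepsilon})n^2$. The edit distance from $G$ to $T_r(n)$ is then at most $b$ plus the absent crossing edges plus the cost of rebalancing the classes, and carefully optimizing the constants through these comparisons yields the stated bound $8\sqrt{\varepsilon}n^2$. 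I would finally remark that Szemer\'edi's regularity lemma, combined with the classical induction on $r$ for Tur\'an stability on the (bounded, $K_{r+1}$-free) reduced graph, gives a conceptually cleaner route, at the cost of replacing the explicit constant by an unspecified $O(\sqrt{\varepsilon})$ and a tower-type $n_0$.
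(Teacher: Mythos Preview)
The paper does not prove this statement: it is listed in Section~2 among the auxiliary results taken from the literature (here, cited as \cite{erdos1966limit}), with no proof supplied. So there is no ``paper's own proof'' to compare against; the authors simply invoke the classical Erd\H{o}s--Simonovits stability theorem as a black box, using it once in the proof of \cref{lmm:edge-critical} to obtain a near-balanced bipartition.

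As for your sketch itself: the max-cut partition plus local-degree inequality plus ``complete to $K_{(r+1)\times h}$'' is indeed the standard modern route, and the bookkeeping paragraph is accurate. You correctly flag the circularity between ``few internal edges'' and ``classes are balanced and densely cross-joined'' as the crux. The usual way to break it is not an iterative bootstrap but a one-shot minimum-degree reduction: first delete vertices of degree below $(1-1/r-\delta)n$ (only $o(n)$ of them, by the edge hypothesis), so that in the remaining graph every vertex has high degree; then the max-cut classes are automatically near-balanced, and a single application of the $K_{h,h}$-plus-greedy-extension argument suffices. Your regularity-lemma alternative is also standard and would work, with the caveats you note about constants and $n_0$.
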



\section{Forbidding generalized theta graphs}\label{sec3}
In this section we will prove \cref{thm:jun2}.  First we prove the following two lemmas.

\begin{lemma}\label{lem:k-tree}
    Let $H$ be a subgraph of a $k$-tree and $G$ be an $H$-free graph.
\begin{enumerate}    
    \item[\rm (1)] There exist a constant $C:=C(H)$ and a map $f:\mathcal{K}_{k+1}(G)\rightarrow\mathcal{K}_{k}(G)$ such that

    \begin{itemize}
    \item For each $U\in \mathcal{K}_{k+1}(G)$, $f(U)$ is a $k$-clique in $U$;
    \item For each $W\in \mathcal{K}_{k}(G)$, $r(W):=|f^{-1}(W)|\le C$.
    \end{itemize}

     \item[\rm (2)]  $\ex(n,K_{k+1},H) = O (\ex(n,K_{k},H)).$
\end{enumerate}       
\end{lemma}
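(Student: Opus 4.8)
The plan is to prove part (1) first and then deduce part (2) as an easy consequence. Since $H$ is a subgraph of a $k$-tree $T$, fix an embedding of $H$ into $T$ and recall the recursive construction of $T$: starting from a base $K_k$, each new vertex is attached to a $k$-clique. This gives an ordering $v_1,\dots,v_m$ of $V(T)$ (hence of the relevant vertices of $H$) such that every $v_i$ with $i>k$ has exactly $k$ earlier neighbors forming a clique; equivalently, $T$ has a ``simplicial-type'' elimination ordering in which each vertex sees a $k$-clique among its predecessors. The key structural observation I want to use is: in any graph $G$, a $(k+1)$-clique $U$ together with the information of which $k$-subset $f(U)$ we picked lets us try to ``grow'' a copy of $H$ greedily, and if we can always grow it, then $G$ contains $H$, a contradiction.

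\medskip

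\textbf{Step 1: Defining $f$.} For each $U\in\mathcal K_{k+1}(G)$ we must choose a $k$-clique $f(U)\subseteq U$. The natural choice is arbitrary (say, fix a linear order on $V(G)$ and let $f(U)$ be $U$ minus its largest vertex), but the content is in bounding $|f^{-1}(W)|$. So suppose toward a contradiction that some $k$-clique $W$ has more than $C$ extensions to a $(k+1)$-clique that all map to $W$ under $f$ — with the arbitrary choice above this just says $W$ has more than $C$ common neighbors ``to the right,'' i.e. $W$ lies in more than $C$ copies of $K_{k+1}$. I will then argue that if every $k$-clique that arises during a greedy embedding of $H$ has many extensions, one can embed all of $H$, so $C$ can be taken to depend only on $|V(H)|$ and $k$.

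\medskip

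\textbf{Step 2: The greedy embedding.} Process the vertices $v_1,\dots,v_m$ of (the chosen copy of) $H$ in the elimination order. Embed $v_1,\dots,v_k$ onto any $k$-clique of $G$ — for this to exist at all we may first assume $G$ has at least one $k$-clique, else $\mathcal K_{k+1}(G)=\emptyset$ and there is nothing to prove. For $i>k$, the vertex $v_i$ has $k$ already-embedded neighbors in $H$ forming a clique (their images form a $k$-clique $W_i$ in $G$), and we need a common neighbor of $W_i$ in $G$ avoiding the $\le m$ vertices used so far. If every $k$-clique has at least $C:=|V(H)|$ common neighbors in $G$, such a choice is always available, and we embed all of $H$ — contradiction. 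Hence some $k$-clique encountered has fewer than $|V(H)|$ common neighbors. This is not literally the bound on $f^{-1}(W)$ for an arbitrary $W$, so the genuine argument is: start the greedy process having already embedded the base $K_k$ onto $W$ itself at an appropriate point in the order (this uses that the base $K_k$ of the $k$-tree, or at least some $k$-clique of $T$ containing a chosen vertex, can be placed first), conclude that if $W$ had $\ge |V(H)|$ extensions we could complete $H$, and therefore $r(W)=|f^{-1}(W)|\le |V(H)|=:C(H)$. The main obstacle is exactly this bookkeeping: making sure that for \emph{every} $k$-clique $W$ of $G$, the set $f^{-1}(W)$ (the $(k{+}1)$-cliques extending $W$ on the correct side) can be identified with common neighbors of $W$ that feed a greedy embedding rooted at $W$; one must choose the elimination order of $T$ so that $W$'s role is played by a $k$-clique that appears early and whose extension vertex is the one we are free to route.

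\medskip

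\textbf{Step 3: Deducing part (2).} Given $f$ as in part (1), count:
\begin{align*}
|\mathcal K_{k+1}(G)| \;=\; \sum_{W\in\mathcal K_k(G)} |f^{-1}(W)| \;\le\; C(H)\cdot|\mathcal K_k(G)|.
\end{align*}
Taking $G$ to range over $n$-vertex $H$-free graphs and maximizing gives $\ex(n,K_{k+1},H)\le C(H)\cdot\ex(n,K_k,H)=O(\ex(n,K_k,H))$, as claimed. I expect Step 2 to be the crux; Steps 1 and 3 are formal once the greedy lemma is set up correctly.
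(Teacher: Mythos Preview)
Your Step 3 is fine, but Step 2 has a genuine gap, and the choice of $f$ in Step 1 is not innocent. With the ``remove the largest vertex'' rule, the bound $r(W)\le C(H)$ is simply false. Take $k=1$ and $H=P_4$ (a $1$-tree on four vertices): a star $K_{1,n-1}$ is $P_4$-free, but if the centre is the smallest vertex then every edge maps to the centre and $r(\text{centre})=n-1$. More to the point, your greedy embedding only uses the largeness of $r(W)$ at the \emph{first} attachment step. Once you adjoin a vertex $u$ to $W$, the next vertex of $H$ may need to be attached to a $k$-clique $W'\subseteq W\cup\{u\}$ with $W'\ne W$, and nothing in your setup gives any lower bound on the number of extensions of $W'$. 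Reordering the elimination sequence of the ambient $k$-tree cannot help: in any $k$-tree with more than $k+1$ vertices, some attachment is made to a $k$-clique other than the base, so you are forced to control a second $k$-clique.

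The missing idea is the one the paper uses: choose $f$ to minimise $\sum_{W}r(W)^2$. A one-line exchange argument (swap $f(U_0)$ from $W_0$ to another $k$-face $W'$ of $U_0$) then gives $r(W')\ge r(W_0)-1$ for every $k$-face of every $(k{+}1)$-clique in $f^{-1}(W_0)$. This is exactly the propagation you need: starting from any $W$ with $r(W)\ge 2|H|$, each greedy attachment produces new $k$-cliques whose $r$-values drop by at most $1$, so after $|H|$ steps they are still at least $|H|$ and the embedding never gets stuck. In the star example the minimising $f$ sends each edge to its leaf, giving $r\le 1$ everywhere. So the load-balancing choice of $f$ is the crux; once you make it, your Steps 2 and 3 go through with $C=2|H|$.
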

\begin{proof}
    (1) We only need to show the case that $H$ is a $k$-tree.
    Choose a map $f:\mathcal{K}_{k+1}(G)\rightarrow\mathcal{K}_{k}(G)$ that satisfies the first condition and minimizes the function 
   \[\psi(f)=\sum_{W\in\mathcal{K}_{k}(G)}r(W)^2.\]

    \begin{claim}\label{cl:-1} 
     Let $U_0 \in \mathcal{K}_{k+1}(G)$ and define $f(U_0) = W_0$. For any $k$-clique $W'$ in $U_0$, $r(W')\ge r(W_0)-1$.
    \end{claim}

    \begin{proof}
    Let $g:\mathcal{K}_{k+1}(G)\rightarrow\mathcal{K}_{k}(G)$ be a map with
    \[g(U)=\begin{cases} f(U), \quad &\text{if}~ U\neq U_0;\\
    W', \quad &\text{if} ~U=U_0.
    \end{cases}\]
    By the minimality of $\psi(f)$, we have
    \[0\le \psi(g)-\psi(f)=(r(W_0)-1)^2+(r(W')+1)^2-r(W_0)^2-r(W')^2=2(r(W')-r(W_0)+1),\]
    which means $r(W')\ge r(W_0)-1$.
    \end{proof}

    By the definition of $k$-tree, we can obtain a sequence of $k$-trees $(H_1,H_2,\cdots ,H_m)$ such that $H_1$ is a $k$-clique, $H_m=H$, and $H_{i+1}$ is obtained from $H_i$ by joining a new vertex to all vertices of a $k$-clique $W_i$ in $H_i$.
    If there exists a $k$-clique $W_1'$ in $G$ such that $r(W_1') \ge C\ge 2|H|$,  we inductively show that for $1\le i \le m$, we can find a $k$-tree $Q_i$ of $G$ such that 
    \begin{itemize}
    \item $Q_i$ is isomorphic to $H_i$.
    \item $r(W)\ge 2|H|-i$ for any $k$-clique $W$ in $Q_i$.
    \end{itemize} 
    which contradicts the $H$-free property of $G$ since $H_m=H$.
    
    Let $Q_1=W_1'$. Assume that we have picked $Q_j$. 
    For convenience, we denote $V(f^{-1}(W))$ as the set of vertices of all $(k+1)$-cliques in $f^{-1}(W)$.
    For any $k$-clique $W$, let
    \[R(W)=\{u\in V(G):~u\in V(f^{-1}(W))\backslash V(W)\}.\]
    
    By the inductive hypothesis, we have $r(W)\ge 2|H|-j>|H|\ge |Q_j|$ for every $k$-clique $W$ in $Q_j$, then $R(W)\setminus V(Q_j)\neq \varnothing$. 
    As $Q_j$ is isomorphic to $H_j$, there is a $k$-clique $W_j'$ in $Q_j$, and a vertex $v\in R(W_j')\setminus V(Q_j)$ such that by joining vertex $v$ to all vertices of $W_j'$ of $Q_j$, the resulting graph, denoted as $Q_{j+1}$, is isomorphic to $H_{j+1}$.
    
    Now we check that $Q_{j+1}$ satisfies the second condition. 
    Let $U_j$ be the $(k+1)$-clique obtained by joining  $v$ to all vertices of $W_j'$, then $f(U_j)=W_j'$ since $v\in R(W_j')$. 
    For any $k$-clique $W\in\mathcal{K}_{k}(G)$ of $Q_{j+1}$, it is either a $k$-clique in $Q_j$, or a $k$-clique in $U_j$. For the first case, $|f^{-1}(W)|\ge 2|H|-j$ by induction hypothesis; For the second case, by \cref{cl:-1}, we have
    \[r(W)\ge r(W_j')-1\ge 2|H|-j-1,\]
    where the last inequality is concluded from $W_j'\subseteq Q_j$ and induction hypothesis. This finishes the proof of (1).  
    
    (2) Fix an $H$-free graph $G$ with $n$ vertices which maximizes the number of $(k+1)$-cliques, and choose a function $f$ and a constant $C$ that satisfy \cref{lem:k-tree} (1). Hence,  
    \[\ex(n,K_{k+1},H)=|\mathcal{K}_{k+1}(G)|\le C\cdot|\mathcal{K}_{k}(G)|\le C\cdot\ex(n,K_{k},H).\]
    
\end{proof}

It is easy to see that a generalized theta graph is a subgraph of some $2$-tree. Hence, we have the following corollaries of \cref{lem:k-tree}.

\begin{cor}\label{cor:theta}
  For any generalized theta graph $H$ and integer $r$, there is a constant $C$ such that 
  \begin{align*}
  \mathrm{ex}(n, K_r, H)\le Cn^2.
\end{align*}
\end{cor}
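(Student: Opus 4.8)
The plan is to start from the trivial case $r=2$ and climb upward by repeatedly applying part (2) of \cref{lem:k-tree}. The one preliminary step is a mild strengthening of the remark preceding the statement: a generalized theta graph $H=\Theta(p_1,\dots,p_k)$ is a subgraph of a $j$-tree for \emph{every} integer $j\ge 2$, not merely for $j=2$. To see this, fix $j\ge 2$ and start from a copy of $K_j$ on the two root vertices $u,v$ together with $j-2$ auxiliary vertices $w_1,\dots,w_{j-2}$; then build the $k$ paths of $H$ one at a time. To realize a path $u=x_0,x_1,\dots,x_{p_i}=v$, insert $x_1,\dots,x_{p_i-1}$ in order, each time joining $x_{t+1}$ to the $j$-clique $\{x_t,v,w_1,\dots,w_{j-2}\}$ (this is indeed a $j$-clique, since by construction $x_t$ is adjacent to $v$ and to every $w_\ell$); a path with $p_i=1$ is the already-present edge $uv$. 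The result is a $j$-tree that contains $H$ as a subgraph, which is exactly the argument used for $j=2$. (Alternatively: a subgraph of a $2$-tree has treewidth at most $2\le j$, hence is a partial $j$-tree.)

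For the main argument, the case $r\le 2$ is immediate, since an $H$-free graph on $n$ vertices has at most $\binom n2$ edges and so $\ex(n,K_2,H)=\ex(n,H)\le\binom n2=O(n^2)$. Arguing by induction, suppose $\ex(n,K_j,H)=O(n^2)$ for some $j\ge 2$. As $H$ is a subgraph of a $j$-tree, \cref{lem:k-tree}(2) applies with $k=j$ and gives $\ex(n,K_{j+1},H)=O(\ex(n,K_j,H))=O(n^2)$. Iterating from $j=2$ up to $j=r-1$ yields $\ex(n,K_r,H)=O(n^2)$, with the implied constant depending only on $H$ and $r$; the finitely many small values of $n$ are absorbed into the constant via the trivial bound $\ex(n,K_r,H)\le\binom nr$.

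The real content is entirely in \cref{lem:k-tree}, so there is no serious obstacle. The only thing to watch is that the $j$-th step of the induction calls part (2) of \cref{lem:k-tree} with $k=j$, so one does need $H$ to embed in a $j$-tree of that exact size for every $j<r$; this is why I record the strengthened embedding statement at the outset instead of using the remark verbatim.
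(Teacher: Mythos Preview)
Your proof is correct and follows the same route the paper intends: observe that $H$ is a partial $j$-tree and iterate \cref{lem:k-tree}(2) from the trivial bound $\ex(n,K_2,H)\le\binom{n}{2}$. The paper's one-line justification only records the $j=2$ case, so your explicit verification that $H$ embeds in a $j$-tree for every $j\ge 2$ (either via the direct construction or the treewidth remark) is exactly the detail needed to make the iteration go through for general $r$.
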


Note that a $1$-tree is just a tree. Hence, the following corollary is also obvious.
\begin{cor}\label{cor:tree}
For any tree $T$ and integer $r$, there is a constant $C$ such that 
\begin{align*}
\mathrm{ex}(n, K_r, T)\le Cn.
\end{align*}\end{cor}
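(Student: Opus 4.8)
The plan is to obtain \cref{cor:tree} as an iterated application of \cref{lem:k-tree}, in direct analogy with the proof of \cref{cor:theta}, but starting the induction from $1$-trees rather than $2$-trees. The only thing that needs a word of care is that \cref{lem:k-tree}(2) with parameter $k$ requires the forbidden graph to be a subgraph of \emph{some} $k$-tree, so to telescope all the way up to $K_r$ I first need to know that a tree is a subgraph of a $k$-tree for every $k\ge 1$, not just for $k=1$.

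So the first step is the structural fact: for every $k\ge 1$, a tree $T$ embeds into some $k$-tree. For $k=1$ this is the observation already recorded, since a tree is exactly a connected graph built by successively attaching leaves, i.e.\ a $1$-tree (and a forest is a subgraph of a tree). For the inductive step I claim that if a graph $H$ is a subgraph of a $k$-tree $J$, then $H$ is a subgraph of the $(k+1)$-tree $J\vee K_1$. Indeed, fix a construction sequence $K_k=J_1,J_2,\ldots,J_m=J$ in which $J_{i+1}$ is obtained from $J_i$ by joining a new vertex $u_i$ to a $k$-clique $W_i$ of $J_i$, and let $v$ be the apex of $J\vee K_1$. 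Then $J_1\vee K_1=K_{k+1}$, and passing from $J_i\vee K_1$ to $J_{i+1}\vee K_1$ adds exactly the vertex $u_i$, now joined to $W_i\cup\{v\}$, which is a $(k+1)$-clique of $J_i\vee K_1$ because $v$ is adjacent to all of $J_i\supseteq W_i$; hence $J\vee K_1$ is a $(k+1)$-tree and it contains $H\subseteq J$. Iterating from $k=1$ gives the claim for all $k\ge 1$.

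The second step is the telescoping bound itself. Since $T$ is a subgraph of a $k$-tree for every $k\in\{1,\ldots,r-1\}$, \cref{lem:k-tree}(2) applies at each level and gives
\begin{align*}
  \ex(n,K_r,T)=O\big(\ex(n,K_{r-1},T)\big)=\cdots=O\big(\ex(n,K_1,T)\big).
\end{align*}
The number of copies of $K_1$ in any $n$-vertex graph is exactly $n$, so $\ex(n,K_1,T)\le n$. As $r$ is a fixed integer, the chain consists of $r-1$ applications of \cref{lem:k-tree}(2), each contributing only a bounded multiplicative constant, so $\ex(n,K_r,T)=O(n)$; unravelling the constants yields $C=C(T,r)$ with $\ex(n,K_r,T)\le Cn$, as claimed.

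I do not expect a genuine obstacle here: the content is really just bookkeeping on top of \cref{lem:k-tree}. The two places deserving an explicit line are the closure of ``subgraph of a $k$-tree'' under increasing $k$ (the $J\vee K_1$ argument above) and the observation that only finitely many, constantly many, applications of \cref{lem:k-tree} are chained together. For completeness one could also note a fully self-contained alternative avoiding \cref{lem:k-tree}: a $T$-free graph on $n$ vertices is $(|T|-2)$-degenerate, and a $d$-degenerate graph contains at most $\binom{d}{r-1}n$ copies of $K_r$; but the iterative proof is the one consistent with the development of this section.
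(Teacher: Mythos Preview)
Your proof is correct and follows the same route the paper intends: the paper simply notes that a $1$-tree is a tree and declares the corollary ``obvious'', implicitly iterating \cref{lem:k-tree}(2). Your additional care---checking via the cone $J\vee K_1$ that a subgraph of a $k$-tree is also a subgraph of a $(k{+}1)$-tree, so that the hypothesis of \cref{lem:k-tree}(2) holds at every level of the telescoping---fills a detail the paper (and its proof of \cref{cor:theta}) leaves unstated.
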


The following lemma strengthens \cref{lem:k-tree} for generalized theta graphs.

\begin{lemma}\label{lem:theta graph}
    Let $H = \Theta(p_1,\cdots,p_k)$ be a generalized theta graph and $G$ be an $H$-free graph.  
    There exist a constant $C=C(H)$ and a map $f:\mathcal{K}_{3}(G)\rightarrow\binom{E(G)}{2}$ such that
    \begin{itemize}
    \item For each $T\in \mathcal{K}_{3}(G)$, $f(T)$ is a set of two different edges in $T$ ;
    \item For each $e\in E(G)$, $r(e)\le C$, where $r(e)$ is the number of the triangles $T$ such that $e\in f(T)$.
    \end{itemize}
\end{lemma}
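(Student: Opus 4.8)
The plan is to mimic the structure of the proof of \cref{lem:k-tree}(1), but with the map landing in $\binom{E(G)}{2}$ instead of $\mathcal{K}_2(G)$, so that the local-replacement argument now produces \emph{paths} rather than cliques, which is exactly what a generalized theta graph is built from. First I would fix the constant $C$ (to be chosen large, something like $C \ge 10 \max_i p_i \cdot k$ will do) and choose, among all maps $f:\mathcal{K}_3(G)\to\binom{E(G)}{2}$ satisfying the first bullet (i.e. $f(T)$ consists of two of the three edges of $T$), one that minimizes the potential $\psi(f)=\sum_{e\in E(G)} r(e)^2$. The key exchange claim is the analogue of \cref{cl:-1}: if $T_0\in\mathcal{K}_3(G)$ with $f(T_0)=\{e_1,e_2\}$ and $e_3$ is the third edge of $T_0$, then rerouting $f(T_0)$ to $\{e_1,e_3\}$ (or $\{e_2,e_3\}$) cannot decrease $\psi$, which by the same one-line computation as in \cref{cl:-1} gives $r(e_3)\ge \min(r(e_1),r(e_2)) - 1$; more usefully, for \emph{every} pair $\{e,e'\}$ of edges of $T_0$ we get $r(e')\ge r(e)-1$, so the three $r$-values on the edges of any triangle differ pairwise by at most $1$.

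The heart of the argument is then a greedy/inductive construction showing that a single edge $e_0$ with $r(e_0)$ large forces a copy of $H$. Suppose for contradiction $r(e_0)\ge C$. The $r(e_0)$ triangles $T$ with $e_0\in f(T)$ each contribute a vertex adjacent to both endpoints of $e_0$; this already gives a large book, hence $K_{2,C-1}$-type structure. To build $\Theta(p_1,\dots,p_k)$ I would construct its $k$ internal paths one at a time, maintaining the invariant that at every vertex $v$ newly added, and every edge $vu$ newly added, the edge still has $r$-value at least $C - (\text{number of steps so far})$; this is possible because by the pairwise-difference-at-most-$1$ property, walking along edges of triangles that are ``selected'' by $f$ keeps $r$-values from dropping by more than $1$ per step, and because an edge $e$ with $r(e)$ still $> |H|$ has, among its $r(e)$ triangles, at least one whose apex avoids the $\le |H|$ vertices used so far (exactly the pigeonhole used in \cref{lem:k-tree}). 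Concretely: to extend a partial path ending at a vertex with an incident edge $e$ of high $r$-value, pick a triangle $T$ with $e\in f(T)$ and apex $w$ outside the current vertex set; then $f(T)\ni e$ means $f(T)=\{e, e'\}$ with $e'$ one of the other two edges of $T$, and that $e'$ has $r(e')\ge r(e)-1$, so we append the appropriate edge(s) of $T$ and continue. Iterating $\max_i p_i$ times per path, and $k$ paths in all (reusing the two root-endpoints, which is where one must be slightly careful to keep the paths internally disjoint — one picks apices avoiding \emph{all} previously used vertices, and the budget $C$ absorbs the total vertex count $|H|$), produces a subgraph isomorphic to $H$, contradicting $H$-freeness.

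I expect the main obstacle to be the bookkeeping in the path-by-path construction: unlike the clean $k$-tree recursion in \cref{lem:k-tree} where each step adds exactly one vertex joined to a fixed $k$-clique, here each step of building a theta graph may add several new internal vertices of a path at once, and one must verify that the $f$-selected edges genuinely trace out a path (not a shorter closed walk) and that all $k$ paths share exactly the two roots and nothing else. The way to handle this cleanly is to set up the induction on the sequence of ``partial generalized theta graphs'' $H_1\subseteq H_2\subseteq\cdots\subseteq H=H_m$ where $H_i$ is $H$ with the last $m-i$ internal edges deleted (each $H_{i+1}$ obtained from $H_i$ by adding one edge and at most one new vertex), exactly paralleling the $k$-tree sequence $(H_1,\dots,H_m)$ in the proof of \cref{lem:k-tree}(1), and to carry the invariant ``$Q_i\cong H_i$ and $r(e)\ge C - i$ for every edge $e$ of $Q_i$ that is $f$-selected by some triangle''. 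The pigeonhole step $r(e) > |H| \Rightarrow$ a fresh apex exists, together with \cref{cl:-1}-analogue guaranteeing the newly created edge keeps a high $r$-value, then drives the induction through, and choosing $C = 2|H| = 2\sum_i p_i$ (say) makes all the inequalities go through just as in \cref{lem:k-tree}.
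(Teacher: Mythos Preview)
Your overall strategy---minimize $\Psi(f)=\sum_e r(e)^2$, derive an exchange inequality, then from any edge $e_0$ with $r(e_0)\ge C$ grow the $k$ paths of $H$ by repeatedly stepping through triangles with fresh apices---is exactly the paper's. But your exchange claim is misstated in a way that breaks the extension step as you describe it. With $f(T_0)=\{e_1,e_2\}$ and third edge $e_3$, the only available reroutings are to $\{e_1,e_3\}$ or $\{e_2,e_3\}$; both comparisons yield lower bounds on $r(e_3)$ only, namely $r(e_3)\ge\max\{r(e_1),r(e_2)\}-1$ (stronger than your $\min$), and say \emph{nothing} relating $r(e_1)$ to $r(e_2)$. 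So it is not true that all three $r$-values on a triangle differ pairwise by at most~$1$, and in your extension step---pick $T$ with $e\in f(T)=\{e,e'\}$ and continue from $e'$---the bound $r(e')\ge r(e)-1$ is unjustified and in general false: the second selected edge can have $r$-value~$1$ even when $r(e)=C$.

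The fix is to continue from the \emph{non}-selected edge $e^*$ of $T$ rather than from $e'$. Since $e^*$ is also incident to the new apex $w$, nothing changes in the path-building, and now $r(e^*)\ge r(e)-1$ is precisely what the exchange inequality delivers. This is exactly what the paper does (its Claim is $r(e^*)\ge\max\{r(e),r(e')\}-1$): at each step one passes from the non-corresponding edge of the current triangle to a new triangle in which that edge is corresponding, adding one fresh apex; after $p_i-2$ such steps from the $i$-th initial triangle on $e_0=uv$ one has a triangle strip on $p_i+1$ vertices whose outer boundary supplies a $u$--$v$ path of length $p_i$. With $C\ge 2|H|$ the fresh-apex pigeonhole goes through across all $k$ paths simultaneously, and your proof is complete once this one correction is made.
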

\begin{proof}
    Let $C = 2|H|$.
    Choose a map $f:\mathcal{K}_{3}(G)\rightarrow\binom{E(G)}{2}$ that satisfies the first condition and minimizes the function 
   \[\Psi(f)=\sum_{e\in E(G)}r(e)^2.\]   
   
    For any triangle $T_i\in \mathcal{K}_{3}(G)$, if $f(T_i)=\{e_i,e_i'\}$, we call $e_i$ and $e_i'$ the \textit{corresponding edges} of $T_i$ (with respect to $f$),
    and define $e^*_i = E(T_i)\setminus \{e_i,e'_i\}$ as the \textit{non-corresponding edge}.
   The proof of the following claim is the same as that of \cref{cl:-1}.
    
\begin{claim}\label{cl:-12}    
    For any triangle $T_i$, $i\in[m]$, we have $r(e^*_i) \ge  \max\{r(e_i), r(e'_i)\}-1$.
\end{claim}

    To prove ~\cref{lem:theta graph}, we only need to show $r(e)\le C$ for each $e\in E(G)$.
    Suppose, to the contrary, that there exists an edge $e=uv\in f(T)$ with $r(e)> C> k$ for some triangle $T$ in $G$.
    Clearly, there exist $k$ distinct triangles, denoted as $T_1, T_2, \ldots, T_k$, such that $e$ is one of the corresponding edges for each $T_i$ with respect to $f$.
    We will next prove that it is possible to extend $T_1, \ldots, T_k$ into $k$ internally disjoint paths of lengths $p_1, \ldots, p_k$, respectively.
    
    When $p_i = 1$, use the edge $uv$ directly. 
    When $p_i = 2$, let $w_i = V(T_i) \setminus \{u,v\}$ and take the path $uw_iv$.
    For $p_i \geq 3$, we construct a planar graph recursively: Start with triangle $T_i$ and its non-corresponding edge $e_i^*$. 
    By \cref{cl:-12}, $r(e_i^*) \geq C - 1 > k$. 
    Thus there exists a triangle $T_{i_1}$ such that $e_{i}^*$ is a corresponding edge of $T_{i_1}$, with vertex $w_{i_1} = V(T_{i_1}) \setminus V(e_i^*)$ unused in previous constructions.
    
    Repeat this process $p_i - 2$ times: At step $j$, the current triangle $T_{i_j}$ has a non-corresponding edge $e_{i_j}^{*}$ with $r(e_{i_j}^{*}) >|H|$, allowing us to select a new triangle $T_{i_{j+1}}$ where $e_{i_j}^{*}$ is a corresponding edge of $T_{i_{j+1}}$, with new vertex $w_{i_{j+1}} = V(T_{i_{j+1}}) \setminus V(e_{i_{j}}^{*})$ not used in any process.

    After $p_i - 2$ extensions, we obtain a planar graph $B_i$ with faces $T_i, T_{i_1}, \dots, T_{i_{p_i-2}}$. 
    Note that the outer face boundary $\mathcal{C}_i$ of $B_i$ is a cycle of length $p_i + 1$. 
    Removing the edge $uv$ yields a $(u,v)$-path $\mathcal{C}_i - uv$ of length $p_i$. 
    The paths are internally disjoint because each path uses distinct new vertices. 
    These paths form a copy of $H = \Theta(p_1,\dots,p_k)$, contradicting the $H$-freeness of $G$. Thus $r(e) \leq C$ for every edge $e \in E(G)$.
\end{proof}

\begin{figure}[H]
    \centering
\begin{tikzpicture}[scale=0.9]
  \node[inner sep= 1pt](u) at (0,0)[circle,fill]{};
  \node[inner sep= 1pt](u') at (0,-0.3)[]{$u$};
  \node[inner sep= 1pt](u_1) at (0,2)[circle,fill]{};
  \node[inner sep= 1pt](u_2) at (-2,2)[circle,fill]{};
  \node[inner sep= 1pt](u_3) at (-2,4)[circle,fill]{};
  \node[inner sep= 1pt](v) at (2,0)[circle,fill]{};
  \node[inner sep= 1pt](v') at (2,-0.3)[]{$v$};
  \node[inner sep= 1pt](v_1) at (2,2)[circle,fill]{};
  \node[inner sep= 1pt](v_2) at (4,2)[circle,fill]{};
  \node[inner sep= 1pt](v_3) at (4,4)[circle,fill]{};
  \node[inner sep= 1pt](v_4) at (2,4)[circle,fill]{};
  \draw[dashed] (u) -- (v);
  \draw[dashed] (u) -- (u_1);
  \draw[dashed] (u_2) -- (u_1);
  \draw[dashed] (v_1) -- (v);
  \draw[dashed] (v_1) -- (v_2);
  \draw[dashed] (v_4) -- (v_2);
  \draw [red](u) -- (u_2);
  \draw [red](u_2) -- (u_3);
  \draw [red](u_3) -- (v);
  \draw [blue](u) -- (v_1);
  \draw [blue](v_1) -- (v_4);
  \draw [blue](v_3) -- (v_4);
  \draw [blue](v_2) -- (v_3);
  \draw [blue](v) -- (v_2);
  \fill[red, opacity=0.1] (0,0) -- (2,0) -- (0,2) -- cycle;
  \node[inner sep= 1pt](T_1) at (0.4,1)[]{$T_1$};
  \fill[blue, opacity=0.1] (0,0) -- (2,2) -- (2,0) -- cycle;
  \node[inner sep= 1pt](T_t) at (1.6,1)[]{$T_k$};
  \node[inner sep= 1pt](T_t^1) at (2.6,1.3)[]{$T_{k_1}$};
  \node[inner sep= 1pt](T_t^2) at (2.6,2.55)[]{$T_{k_2}$};
  \node[inner sep= 1pt](T_t^3) at (3.5,3.3)[]{$T_{k_3}$};
  \node[inner sep= 1pt](...) at (1,1.7)[]{$\cdots$};
\end{tikzpicture}
\caption{The growth process of a path of length $p_i$ derived from $T_i$.}\label{fig-path}
\end{figure}
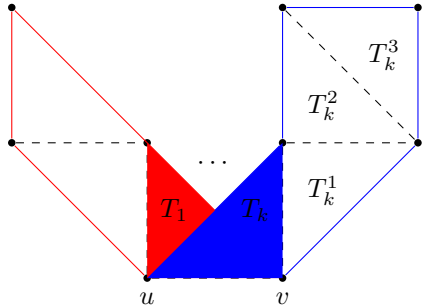

  The following lemma will be used in the proof of \cref{thm:jun2}.

\begin{lemma}\label{cl:color}
 Let $\chi$ be an edge coloring of a complete bipartite graph $G=K_{k^3,k^3}$, then there exists a rainbow star $K_{1,k}$ or a monochromatic matching of size $k+1$ in $G$. 
\end{lemma}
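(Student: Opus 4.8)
The plan is to argue by contradiction: suppose $G=K_{k^3,k^3}$, with parts $A$ and $B$, contains neither a rainbow $K_{1,k}$ nor a monochromatic matching of size $k+1$. The absence of a rainbow $K_{1,k}$ means every vertex of $G$ is incident to at most $k-1$ distinct colors, and the goal is to derive a contradiction from the resulting scarcity of colors.

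First I would extract, for each vertex $v\in A$, a \emph{dominant color} $c(v)$: since $v$ has degree $k^3$ but sees at most $k-1$ colors, by pigeonhole some color appears on at least $\lceil k^3/(k-1)\rceil\ge k^2+k+1$ edges at $v$. Next I would observe that no color $c$ can be the dominant color of $k+1$ distinct vertices $v_1,\dots,v_{k+1}\in A$: one could greedily choose distinct $u_1,\dots,u_{k+1}\in B$ with $v_iu_i$ of color $c$, since when picking $u_j$ the vertex $v_j$ still has at least $(k^2+k+1)-(j-1)\ge k^2+1$ available color-$c$ neighbors in $B$; this would be a monochromatic matching of size $k+1$. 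Hence each color is dominant for at most $k$ vertices of $A$, and since every one of the $k^3$ vertices of $A$ has a dominant color, at least $k^3/k=k^2$ distinct colors occur as dominant colors.

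The key step is then a double count. For each such color $c$ fix a witness $v_c\in A$ and let $N_c\subseteq B$ be the set of color-$c$ neighbors of $v_c$, so $|N_c|\ge k^2+k+1$. Counting incidences between these (at least $k^2$) sets and the $k^3$ vertices of $B$ gives $\sum_c|N_c|\ge k^2(k^2+k+1)>k\cdot k^3=k\,|B|$, so by averaging some vertex $u\in B$ lies in more than $k$ of the sets $N_c$. For each such color $c$ the edge $v_cu$ has color $c$, so $u$ is incident to more than $k$ distinct colors; choosing one edge of each of $k$ such colors produces a rainbow $K_{1,k}$ centered at $u$, contradicting our assumption. Therefore a monochromatic matching of size $k+1$ (or a rainbow $K_{1,k}$) must exist.

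I do not expect a serious obstacle: the argument is short once the notion of dominant color is in place, and it is really just this averaging step. The only points needing a little care are the elementary inequality $\lceil k^3/(k-1)\rceil\ge k^2+k+1$ ensuring a dominant color is used on enough edges — comfortably more than $k$, which is exactly what both the greedy matching and the strictness of the final count require — and checking that the extreme case $k=2$, in which every vertex sees a single color, is handled correctly by the same reasoning (there all of $A$ must use one common color, forcing a monochromatic matching on all of $B$).
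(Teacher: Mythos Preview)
Your proposal is correct and follows essentially the same route as the paper: extract a large monochromatic star (dominant color) at each vertex of $A$, bound the multiplicity of each color via the greedy matching argument, deduce at least $k^2$ distinct dominant colors, and then average over $B$ to locate a vertex incident to $k$ differently-colored edges. The only cosmetic differences are that the paper works with star size $k^2$ rather than $k^2+k+1$ (yielding a non-strict average of $k$ at the end, which still suffices) and phrases the final step as a pigeonhole rather than a double count.
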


\begin{proof}
  Let $A\cup B$ be the two partition classes of $V(G)$.
  Suppose there are no vertices in $A$ as the center of a rainbow $K_{1,k}$.
  We will show that either there exists a monochromatic $(k+1)$-matching, or a rainbow $K_{1,k}$ with a vertex in $B$ as its center.
  Since each vertex in $A$ is associated with at most $k-1$ colors, for each vertex $v$ in $A$, there exists a monochromatic star $K_{1,k^2}$ centered at $v$.
  Remove all edges except those in these monochromatic stars $K_{1,k^2}$.

  Assume that there is no monochromatic $(k+1)$-matching.
  That is, each color appears in at most $k$ monochromatic $K_{1,k^2}$.
  Otherwise, we can recursively find a monochromatic $(k+1)$-matching.
  Hence, by the pigeonhole principle, there are at least $k^2$ colors appear among these monochromatic $K_{1,k^2}$.
  For each color, we keep only one monochromatic $K_{1,k^2}$, and denote the resulting graph by $G'$.
  As a result, any vertex in $B$ receives edges of distinct colors.
  Then there are $k^4$ edges in $G'$.
  By the pigeonhole principle again, there is a rainbow $K_{1,k}$ with a center in $B$.
\end{proof}

Now we are ready to prove \cref{thm:jun2}.

\begin{proof}[\textbf{Proof of \cref{thm:jun2}}]

    Let $G$ be an $H$-free graph and $\mathcal{K}_3(G)= \{T_1,T_2,\cdots, T_m\}$ be the collection of all triangles in $G$.
    Without loss of generality, we can assume $p_1\le p_2 \le\cdots \le p_k$.     
    Let $t= |V(H)|$ and $\alpha= 1/t^4$.

\medskip
\noindent{\textbf{Case 1.}} $H$ contains at most one triangle.

\vspace{0.2cm}
    By \cref{lem:k-tree}, there exists a constant $C=C(H)$ and a map $f:\mathcal{K}_{3}(G)\rightarrow E(G)$ satisfying:
\begin{itemize}
    \item For each $T\in \mathcal{K}_{3}(G)$, $f(T)$ is an edge in $T$;
    \item For each $e\in \mathcal{K}_{2}(G) = E(G)$, $r(e)=|f^{-1}(e)|\le C$.
    \end{itemize}
    
    For each $T_i \in \mathcal{K}_{3}(G)$, let $x_i = V(T_i) \setminus V(f(T_i))$.
    Let $(A^*,B^*)$ be a random partition of $V(G)$ where each vertex picks each part with probability $1/2$, independently of other vertices.
    For each $T_i$, the probability 
\begin{align*}
   \mathbf{P}(x_i\in A^*\text{ and }V(f(T_i))\subseteq B^*) =1/8, 
\end{align*}    
    so there exists a partition $(A,B)$ such that the number of triangles $T_i$  in $\mathcal{K}_3(G)$ satisfying $x_i\in A$ and $V(f(T_i)) \subseteq B$ is at least $m/8$.
    
    Let $G'$ be the auxiliary graph with vertex set $B$ and edge set $E':=\{f(T_i)\mid  x_i\in A\text{ and }V(f(T_i))\subseteq B \text{ for some } i \in[m] \}$.
    Since for each $e\in E(G)$, $r(e) \le C$, we have $e(G')\ge \frac{m}{8C}$.
    
    If $m= \Omega(n^{2-\alpha})$, by \cref{thm:KST}, $G'$ contains a $K_{t^3,t^3}$ as a subgraph, denote it by $F$.
    Recall that for each $e\in E(G')$, there exists at least one vertex $x$ in $A$ such that $x$ and $e$ form a triangle in $G$. 
    We call one such vertex $x$ the \textit{corresponding vertex} of $e$ in $A$.

\medskip
\noindent\textbf{Subcase 1.1.} There exists a matching $M$ of size $k+1$ in $F$, such that all the edges in $M$ have same corresponding vertex $y$. 
\vspace{0.2cm}

    Suppose $M = \{v_1u_1,\cdots, v_{k+1}u_{k+1}\}$ such that $v_1,\cdots v_{k+1}$ are in the same part of $F$. We will show that we can progressively find $k$ pairwise internally disjoint paths $P_i$ with end vertices $u_1,y$ and length $p_i$ such that each path uses at most one element in $\{v_1,\cdots,v_{k+1},u_2,\cdots,u_{k+1}\}$. When $p_1=1$, we directly choose the edge $yu_1$. For $p_i\ge 2$, the selection depends on the parity of $p_i$ as follows:
   \begin{itemize}
     \item If $p_i$ is even, choose a vertex $v'\in\{v_1,\cdots,v_k\}$ which has not been chosen, yet. Note that we can find a path $L$ with end vertices $u_1,v'$ and length $p_i-1$ in $F$ which is internally disjoint from $V(M)$ and all the paths we have selected. Hence, $Ly$ (the orange line in \cref{1.1}) is a path with end vertices $u_1,y$ and length $p_i$.

     \item If $p_i\ge 3$ and $p_i$ is odd, choose a vertex $u'\in\{u_1,\cdots,u_k\}$ which has not been chosen, yet.  Note that we can find a path $L$ with end vertices $u_1,u'$ and length $p_i-1$ in $F$ which interior disjoint with $V(M)$ and all the paths we have selected. Hence, $Ly$ (the green line in \cref{1.1}) is a path with end vertices $u_1,y$ and length $p_i$.

    \end{itemize}

    Thus, $G$ contains a copy of $H$, a contradiction.

\begin{figure}[!ht]
    \centering
\begin{tikzpicture}[scale=1.2]
\node[inner sep= 1.3pt, blue](y) at (4,3.6)[circle,fill]{};
\node[inner sep= 1.3pt](y1) at (4,3.8)[]{\small $y$};
\node[inner sep= 1.3pt,blue](u) at (3,2)[circle,fill]{};
\node[inner sep= 1.3pt](u1) at (3.25,2.08)[]{\small $u_1$};
\node[inner sep= 1pt](v) at (3,0)[circle,fill]{};
\node[inner sep= 1pt](v1) at (3.2,0)[]{\small $v_1$};
\node[inner sep= 1pt](v2) at (0.6,0)[circle,fill]{};
\node[inner sep= 1pt](v21) at (0.8,0)[]{\small $v_2$};
\node[inner sep= 1pt](u2) at (0.6,2)[circle,fill]{};
\node[inner sep= 1pt](u21) at (0.85,1.9)[]{\small $u_2$};
\node[inner sep= 1pt](v3) at (5.6,0)[circle,fill]{};
\node[inner sep= 1pt](v31) at (5.8,0)[]{\small $v_3$};
\node[inner sep= 1pt](u3) at (5.6,2)[circle,fill]{};
\node[inner sep= 1pt](u31) at (5.8,1.9)[]{\small $u_3$};
\draw[mypink] (v) -- (u);
\draw[mypink] (y) -- (v);
\draw[darkpastelgreen] (u2) -- (y);
\draw[darkpastelgreen] (u2) -- (v2);
\draw[dashed] (y) -- (v2);
\draw[dashed] (y) -- (u3);
\draw[dashed] (v3) -- (u3);
\draw[myorange] (y) -- (v3);
\draw [decorate, decoration=snake, segment length=4mm, myorange](u) -- (v3);
\draw [decorate, decoration=snake, segment length=4mm, darkpastelgreen](u) -- (v2);
\end{tikzpicture}
\caption{The illustration of Subcase 1.1. }\label{1.1}
\end{figure}
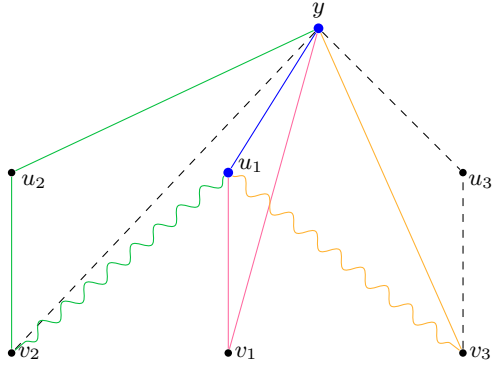

\medskip
\noindent\textbf{Subcase 1.2.} $p_1 =1$, and there does not exist a matching of size $k+1$ in $F$ such that all the $k+1$ edges have same corresponding vertex. 
\vspace{0.2cm}

    Then by the pigeonhole principle, there exists a matching $M$ of size $k$ in $F$ such that each edge in $M$ has a distinct  corresponding vertex.
    Suppose $M = \{v_1u_1,\cdots, v_ku_k\}$ such that $v_1,\cdots v_k$ are in the same part of $F$ and $x_i$ is the corresponding vertex of $u_iv_i$.
    We will show that we can progressively find $k$ pairwise internally disjoint paths $P_i$ with end vertices $u_1,v_1$ and length $p_i$.
    When $p_1=1$, we directly choose the edge $yu_1$. For $p_i\ge 2$, the selection depends on the parity of $p_i$ as follows:
    \begin{itemize}
    \item If $p_i=2$, choose the path $v_1x_1u_1$. Note that we can do this since $H$ contains at most one triangle.

    \item If $p_i\ge 3$ and $p_i$ is odd, choose an edge $u'v'$ in $M$ with $u'\in \{u_2,\cdots,u_k\}$ which has not been chosen yet. Note that we can find a path $L$ with end vertices $u_1,v'$ and length $p_i-2$ in $F$ which is internally disjoint with $V(M)$ and all the paths we have selected. Hence, $Lu'v_1$ (the green line in \cref{1.2}) is a path with end vertices $u_1,v_1$ and length $p_i$.

    \item If $p_i\ge 4$ and $p_i$ is even, choose an edge $u'v'$ in $M$ with $u'\in \{u_2,\cdots,u_k\}$ which has not been chosen, yet. Let $x'$ be the corresponding vertex of $u'v'$ in $A$. Note that we can find a path $L$ with end vertices $u_1,v'$ and length $p_i-3$ in $F$ which is internally disjoint with $V(M)$ and all the paths we have selected. Hence, $Lx'u'v_1$ (the orange line in \cref{1.2}) is a path with end vertices $u_1,v_1$ and length $p_i$.
    \end{itemize}
    
    Thus, $G$ contains a copy of $H$, a contradiction.

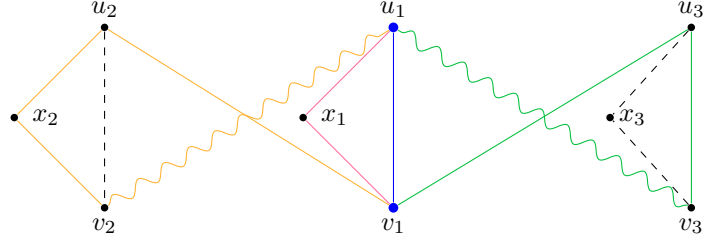
\begin{figure}[!ht]
    \centering
\begin{tikzpicture}[scale=1.2]
  \node[inner sep= 1.3pt,blue](u_1) at (3.2,2)[circle,fill]{};
  \node[inner sep= 1.3pt](u_1') at (3.2,2.2)[]{$u_1$};
  \node[inner sep= 1.3pt,blue](v_1) at (3.2,0)[circle,fill]{};
  \node[inner sep= 1.3pt](v_1') at (3.2,-0.2)[]{$v_1$};
  \node[inner sep= 1pt](x_1) at (2.2,1)[circle,fill]{};
  \node[inner sep= 1pt](x_1') at (2.55,1)[]{$x_1$};
  \draw[mypink](x_1) -- (u_1);
  \draw[mypink](x_1) -- (v_1);
  \draw[blue](u_1) -- (v_1);
  \node[inner sep= 1pt](u_2) at (0,2)[circle,fill]{};
  \node[inner sep= 1pt](u_2') at (0,2.2)[]{$u_2$};
  \node[inner sep= 1pt](v_2) at (0,0)[circle,fill]{};
  \node[inner sep= 1pt](v_2') at (0,-0.2)[]{$v_2$};
  \node[inner sep= 1pt](x_2) at (2-3,1)[circle,fill]{};
  \node[inner sep= 1pt](x_2') at (2.35-3,1)[]{$x_2$};
  \draw[myorange](x_2) -- (u_2);
  \draw[myorange](x_2) -- (v_2);
  \draw[myorange](u_2) -- (v_1);
  \draw [decorate, decoration=snake, segment length=4mm,myorange](v_2) -- (u_1);
  \node[inner sep= 1pt](u_3) at (6.5,2)[circle,fill]{};
  \node[inner sep= 1pt](u_3') at (6.5,2.2)[]{$u_3$};
  \node[inner sep= 1pt](v_3) at (6.5,0)[circle,fill]{};
  \node[inner sep= 1pt](v_3') at (6.5,-0.2)[]{$v_3$};
  \node[inner sep= 1pt](x_3) at (2+3.6,1)[circle,fill]{};
  \node[inner sep= 1pt](x_3') at (5.85,1)[]{$x_3$};
  \draw[dashed] (u_2) -- (v_2);
  \draw[dashed] (u_3) -- (x_3);
  \draw[dashed] (v_3) -- (x_3);
   \draw[darkpastelgreen](u_3) -- (v_3);
    \draw[darkpastelgreen](u_3) -- (v_1);
  \draw [decorate, decoration=snake, segment length=4mm, darkpastelgreen](v_3) -- (u_1);
\end{tikzpicture}
\caption{The illustration of Subcase 1.2.}\label{1.2}
\end{figure}

\noindent\textbf{Subcase 1.3.} $p_1 >1$, and there does not exist a matching of size $k+1$ in $F$ such that the $k+1$ edges in $F$ have same corresponding vertex.

    By \cref{cl:color}, there exists a copy of $K_{1,k}$, denote it by $N = \{uv_1,\cdots, uv_k\}$, such that each edge in $N$ has a distinct corresponding vertex. Let $x_i$ be the corresponding vertex of  $uv_i$.
    Choose a vertex $u'$ in $F$ such that $u,u'$ are in the same part. 
    Since $p_1 >1$, we have $p_i\ge 2$ for all $i\in[k]$.
    We will show that we can progressively find $k$ pairwise internally disjoint paths $P_i$ with end vertices $u,u'$ and length $p_i$ such that each path uses at most one element in $\{v_1,\cdots,v_{k}\}$.
    
    \begin{itemize}

    \item If $p_i$ is odd, choose a vertex $v'\in\{v_1,\cdots,v_k\}$ which has not been chosen, yet.
    Let $x'$ be the corresponding vertex of $uv'$.
    Note that we can find a path $L$ with end vertices $u',v'$ and length $p_i-2$ in $F$ which is internally disjoint with $V(N)$ and all the paths we have selected. Hence, $Lx'u$ (the green line in \cref{1.3}) is a path with end vertices $u,u'$ and length $p_i$.

    \item If $p_i$ is even, choose a vertex $v'\in\{v_1,\cdots,v_k\}$ which has not been chosen, yet.  Note that we can find a path $L$ with end vertices $u',v'$ and length $p_i-1$ in $F$ which is internally disjoint with $V(N)$ and all the paths we have selected. Hence, $Lu$ (the orange line in \cref{1.3}) is a path with end vertices $u,u'$ and length $p_i$.
    \end{itemize}

    Thus, $G$ contains a copy of $H$, a contradiction.

    \begin{figure}[!ht]
    \centering
\begin{tikzpicture}[scale=1.4]
  \node[inner sep= 1.3pt,blue](u) at (2.5,3)[circle,fill]{};
  \node[inner sep= 1.3pt](u') at (2.5,3.2)[]{$u$};
  \node[inner sep= 1.3pt,blue](u') at (1,3)[circle,fill]{};
  \node[inner sep= 1.3pt](u1') at (0.82,3.08)[]{$u'$};
  \node[inner sep= 1pt](v_1) at (1,1)[circle,fill]{};
  \node[inner sep= 1pt](v_1') at (1,0.8)[]{$v_1$};
   \node[inner sep= 1pt](v_2) at (2.5,1)[circle,fill]{};
  \node[inner sep= 1pt](v_2') at (2.5,0.8)[]{$v_2$};
   \node[inner sep= 1pt](v_3) at (4,1)[circle,fill]{};
  \node[inner sep= 1pt](v_3') at (4,0.8)[]{$v_3$};
  \node[inner sep= 1pt](x) at (0,3.4)[circle,fill]{};
  \node[inner sep= 1pt](x1) at (-0.1,3.2)[]{$x_1$};
  \draw[dashed](u) -- (v_1);
  \draw[darkpastelgreen](u) -- (x);
  \draw[darkpastelgreen](v_1) -- (x);
  \draw[myorange](v_2) -- (u);
  \draw[myorange](v_3) -- (u);
   \draw [decorate, decoration=snake, segment length=4mm,darkpastelgreen](u') -- (v_1);
  \draw [decorate, decoration=snake, segment length=4mm,myorange](v_2) -- (u');
  \draw [decorate, decoration=snake, segment length=4mm,myorange](v_3) -- (u');
\end{tikzpicture}
\caption{The illustration of Subcase 1.3.}\label{1.3}
\end{figure}
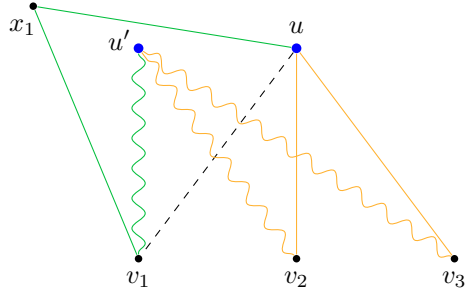

\medskip
\noindent\textbf{Case 2.} $H$ contains at least two triangles and does not contain $\Theta(1,2,2,3)$ as a subgraph.
\vspace{0.2cm}

This means $p_1 =1$, $p_2 = p_3 =2$ and $p_i\ge 4$ for $i\ge 4$. 
Since $H$ contains $B_2$ as a subgraph, by (\ref{thm:book}), we have $\ex(n, K_3,H)\ge \ex(n, K_3,B_2) \ge n^{2-o(1)}$.
For the upper bound, assume to the contrary that there exists $\varepsilon >0$ such that $|\mathcal{K}_{3}(G)|\ge \varepsilon n^2$.

By \cref{lem:theta graph}, there exists a map $f:\mathcal{K}_{3}(G)\rightarrow\binom{E(G)}{2}$ and a constant $C=C(H)$ such that

    \begin{itemize}
    \item For each $T\in \mathcal{K}_{3}(G)$, $f(T)$ is a set of two different edges in $T$;
    \item For each $e\in E(G)$, $r(e)\le C$, where $r(e)$ is the number of the triangles $T$ such that $e\in f(T)$.
    \end{itemize}
    
Define
\[\mathcal{T}_{1}(G)= \{T\mid T\in \mathcal{K}_{3}(G) \text{ and there exists some } e\in T \text{ with } d_G(e) \ge C\}; \quad \mathcal{T}_{2}(G)=\mathcal{K}_{3}(G)\setminus\mathcal{T}_{1}(G),\]
where $d_G(e)$ is the number of triangles in $G$ contains edge $e$.

Suppose $\mathcal{T}_{1}(G) = \{T_1,\cdots,T_m\}$.
Since there is no book with $C$ page vertices in $\mathcal{T}_{2}(G)$, we have $|\mathcal{T}_{2}(G)| = o(n^2)$ from (\ref{thm:book}), which implies that $m\ge  |\mathcal{K}_{3}(G)|/2$.

For each triangle $T_i$, suppose $f(T_i)=\{e_i,e_i'\}$ and $V(T_i)=\{x_i,y_i,z_i\}$ with $e_i = x_iy_i$, $e_i' = x_iz_i$, $e_i^*=y_iz_i.$

 Let $(X^*,Y^*,Z^*)$ be a random partition of $V(G)$ where each vertex picks each part with probability $1/3$, independently of other vertices.
    For each $T_i$, the probability 
\begin{align*}
   \mathbf{P}(x_i\in X^*, ~y_i\in Y^*, ~z_i \in Z^*) = 1/27.
\end{align*}    
    Hence there exists a partition $(X,Y,Z)$ such that the number of triangles $T_i$  in $\mathcal{K}_3(G)$ satisfying $x_i\in X$, $y_i\in Y$, $z_i\in Z$ is at least $m/27$. Let 
\[\mathcal{T}_{3}(G)= \{T_i\mid T_i\in \mathcal{T}_{1}(G), ~x_i\in X, ~y_i\in Y, ~z_i \in Z\}.\]

Without loss of generality, assume that $\mathcal{T}_{3}(G) = \{T_1,\cdots ,T_{m'}\}$,  then $m' \ge m/27$. 

Let $E'$ be the collection of edges $e$ satisfying $e\in T$ and $d_G(e)\ge C$ for some triangle $T$.
We claim that $|E'| =O(n^{2-\alpha})$.
Otherwise, by \cref{thm:KST}, there exists a copy of $K_{t^3,t^3}$ in $E'$, and it is easy to obtain a copy of $H$ in $G$ by the definition of $E'$.

Let 
\[\mathcal{T}_{4}(G)= \{T_i\mid T_i\in \mathcal{T}_{3}(G), ~d_G(e_i^*)\ge C\}  \text{\quad and \quad} \mathcal{T}_{5}(G)=\mathcal{T}_{3}(G)\setminus\mathcal{T}_{4}(G).\]

Then for any $T\in\mathcal{T}_{5}(G)$, there exists an edge $e \in f(T)$ such that $d_G(e)\ge C$. Hence, we can find a function $g:\mathcal{T}_{5}(G)\rightarrow E'$ such that for any triangle $T\in\mathcal{T}_{5}(G)$, $g(T)\in f(T)$. Moreover, for any $e\in E'$, $|g^{-1}(e)|\le C$, which means $$|\mathcal{T}_{5}(G)|\le C|E'|=O(n^{2-\alpha}).$$

Hence, $\mathcal{T}_{4}(G)\ge m'/2$.
Without loss of generality, we can assume $\mathcal{T}_{4}(G) = \{T_1, T_2 \cdots, T_{m''}\}$ with $m''\ge \frac{m'}{2} \ge \frac{m}{54} \ge \frac{\varepsilon n^2}{54}$. 
Let $E^* = \{e^*_i\mid i\in [m'']\}$. Then we have $|E^*| \le |E'|= O(n^{2-\alpha})$.
Let $G^*$ be the bipartite graph between $X$ and $Y\cup Z$ with the edge set $\bigcup_{T\in \mathcal{T}_{4}(G)} f(T)$.

\begin{claim}\label{cl:3.8}
There exists a copy of complete bipartite graph $F=K_{t^3,t^3}$ in $G^*$, such that $V(F) \cap (Y\cup Z)$ contains two vertices forming an edge in $E^*$.
\end{claim}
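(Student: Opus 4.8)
The plan is to first extract the structural consequences of the reductions made so far, then reformulate the claim into a statement about a dense bipartite graph, and finally produce the required $K_{t^3,t^3}$ by a dependent‑random‑choice argument biased towards the $E^*$‑structure. The facts I would record are: (i) $G^*$ is a bipartite graph between $X$ and $Y\cup Z$ with edge set $\bigcup_{T\in\mathcal T_4(G)}f(T)$, and since each $G^*$-edge lies in at most $C$ such pairs, $e(G^*)\ge \tfrac{2}{C}|\mathcal T_4(G)|=\Omega(n^2)$; (ii) every $G^*$-edge has an endpoint among $\{y_i,z_i\}=V(E^*)$, so $G^*$ is in fact a bipartite graph between $X$ and $V(E^*)$, which forces $|V(E^*)|=\Omega(n)$ and hence that $G^*$ has density $\Omega(1)$; (iii) $E^*\subseteq E'$, so $E^*$ is $K_{t^2,t^2}$-free and $|E^*|=O(n^{2-\alpha})$; (iv) counting the ``cherries'' $y_i x_i z_i$, one gets $\sum_{x\in X}\big|\{e\in E^*:e\subseteq N_{G^*}(x)\}\big|\ge |\mathcal T_4(G)|=\Omega(n^2)$.

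Next I would reformulate the goal: writing $P(e)=\bigcap_{w\in e}N_{G^*}(w)\subseteq X$, it suffices to find a set $B\subseteq V(E^*)$ with $|B|=t^3$ that contains an edge of $E^*$ and satisfies $|N_{G^*}(B)|\ge t^3$; then $B$ is the $(Y\cup Z)$-side of $F$ and any $t^3$ vertices of $N_{G^*}(B)\subseteq X$ form the other side. So the task becomes: find an $E^*$-edge $\{y_0,z_0\}$ and $t^3-2$ further vertices of $V(E^*)$ such that the resulting $t^3$-set has at least $t^3$ common $G^*$-neighbours.

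For this I would use dependent random choice, biased towards the cherry structure. Since $G^*$ is dense, choosing a probe of length $m=t^3$ on the $X$-side yields a random common-neighbourhood $U\subseteq V(E^*)$ with $\mathbf E|U|=\Omega(n)$ and $O(1)$ expected ``bad'' $t^3$-subsets (those with fewer than $t^3$ common $G^*$-neighbours), so deleting one vertex from each bad subset leaves a robust set: any $t^3$ of its vertices have $\ge t^3$ common $G^*$-neighbours. The role of (iv) is to let us choose the probe so that $U$ also contains an $E^*$-edge: by (iv) and $|E^*|=O(n^{2-\alpha})$ a positive fraction of the cherry mass sits on edges $e$ with $|P(e)|$ polynomially large, and by (iii) the bound $|\{e\in E^*:e\subseteq N_{G^*}(x)\}|\le\binom{d_{G^*}(x)}{2}$ improves to $O(d_{G^*}(x)^{2-1/t^2})$, which confines the cherry mass to $X$-vertices of $G^*$-degree at least a fixed positive power of $n$. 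I would therefore pick the probe from $P(e)$ for a suitably cherry-heavy edge $e$, forcing $e\subseteq U$, while checking that the robustness estimates survive because the $P(e)$-vertices have large degree; then set $B=\{y_0,z_0\}\cup(\text{any }t^3-2\text{ further vertices of }U)$ and conclude as above.

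The step I expect to be the main obstacle is precisely this biased dependent-random-choice estimate: the probe must be concentrated enough on the $E^*$-rich part of $X$ to capture an $E^*$-edge inside $U$, yet spread out enough that $U$ remains large and robust, and making the exponents line up (note $\alpha=t^{-4}$ whereas we want $K_{t^3,t^3}$) is delicate. If the direct weighting loses too much, the fallback I would use is degree-bucketing: pass to the dyadic $G^*$-degree class $X_j$ of $X$ carrying the largest share of the cherry mass, pick an $E^*$-edge $e$ with $|P(e)\cap X_j|$ as large as possible, and apply the Kővári–Sós–Turán theorem inside the bipartite graph induced by $P(e)\cap X_j$ and $V(E^*)$ — using that the vertices of $X_j$ all have comparable, and (by the $K_{t^2,t^2}$-freeness refinement) polynomially large, $G^*$-degree.
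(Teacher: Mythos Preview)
Your preliminary observations (i)–(iv) are correct and coincide with the paper's setup; in particular (iv) is the heart of the matter. The divergence is in the execution. The paper does \emph{not} use dependent random choice; it uses a direct K\H{o}v\'ari--S\'os--Tur\'an style double count with two coordinates ``marked'' by $E^*$. Namely, it counts tuples $(x,w_1,\ldots,w_{t^3})$ with $x\in X$, distinct $w_i\in Y\cup Z$, $w_1w_2\in E^*$, and $xw_i\in E(G^*)$ for all $i$. If the claim fails, each fixed $(w_1,\ldots,w_{t^3})$ admits fewer than $t^3$ valid $x$'s, so the count is at most $2|E^*|\cdot n^{t^3-2}\cdot(t^3-1)=O(n^{t^3-\alpha})$. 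For the lower bound, one first passes (via $r(e)\le C$) to a set $X'\subseteq X$ of $\Omega(n)$ vertices each lying in $\Omega(n)$ triangles of $\mathcal T_4(G)$; every such $x$ has $d_{G^*}(x)=\Omega(n)$ and sees $\Omega(n)$ edges of $E^*$ inside $N_{G^*}(x)$, giving $\Omega(n^{t^3})$ tuples. This is exactly the refinement of (iv) you were aiming for, but cashed out as a first-moment count rather than a probe argument.

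Your biased DRC plan, by contrast, runs into the very exponent problem you flag. If you pick the $m=t^3$ probe vertices from $P(e)\cap X''$ for a cherry-heavy edge $e$, you only know $|P(e)\cap X''|=\Omega(n^{\alpha})$ with $\alpha=t^{-4}$, and the standard convexity bound on $\mathbf E|U|$ degrades badly: controlling ``bad $t^3$-sets'' needs $m\gtrsim t^3/\alpha=t^7$, while $\mathbf E|U|$ shrinks with $m$. Conversely, probing from a linear-sized high-degree set $X'$ gives large $\mathbf E|U|$ and $O(1)$ bad sets, but then $\mathbf E\bigl[\#\{e\in E^*: e\subseteq U\}\bigr]=\sum_e(|P(e)\cap X'|/|X'|)^{t^3}$ can be $o(1)$ when $|E^*|$ is near $n^{2-\alpha}$ and the $|P(e)|$'s are spread thin. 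So the probe-based route genuinely requires extra work (your bucketing fallback), whereas the marked double count closes in one line using only $|E^*|=O(n^{2-\alpha})$ and the linear-degree set $X'$.
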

\begin{proof}
Suppose, for a contradiction, the statement does not hold. 
To derive a contradiction, we use double counting to count the number of tuples $(x,w_1,w_2,\dots, w_{t^3})$ satisfying the following conditions:
\begin{itemize}
    \item $x\in X$, and for $i\in[t^3]$, the $w_i$'s are different vertices in $Y\cup Z$;
    \item $w_1w_2\in E^*$;
    \item $xw_i \in E(G^*)$.
\end{itemize} 

Let $s$ denote the number of such tuples.
There are at most $2|E^*|$ choices for the pair $(w_1, w_2)$ and at most $n^{t^3-2}$ choices for the tuple $(w_3, w_4, \dots, w_{t^3})$.
Furthermore, for each fixed choice of $(w_1, w_2, \dots, w_{t^3})$, by the assumption that no copy of $K_{t^3,t^3}$ satisfies the condition in \cref{cl:3.8}, there are at most $t^3 - 1$ choices for $x$.
Hence 
$$ s\le (t^3-1)\cdot 2|E^*| \cdot n^{t^3-2}  = O(n^{t^3 -\alpha}). $$

Next we estimate the lower bound of $s$.
Recall that each triangle in $\mathcal{T}_{4}(G)$ has exactly one vertex in each of the sets  $X,Y$ and $Z$.
Let $X'$ be the set of vertices $x$ in $X$ such that $x$ is contained in more than $\frac{m''}{3n}$ triangles in $\mathcal{T}_4(G)$.
Then, for any fixed $x\in X'$, there are at least $\frac{2m''}{3n}$ choices for the pair $(w_1, w_2)$, where $(x,w_1, w_2)$ forms a triangle in $\mathcal{T}_{4}(G)$ and $w_1w_2 \in E^*$.
It follows form  \cref{lem:theta graph} that for any edge $e \in E(G^*)$, we have $r(e) \le C$.
Therefore, for  every $x \in X'$, the number of triangles in $\mathcal{T}_4(G)$ containing $x$ is at most $C d_{G^*}(x)<Cn$.
Then,
 $$m'' =|\mathcal{T}_{4}(G)| \le |X'|Cn + n\cdot \frac{m''}{3n}, $$
which implies $|X'|\ge \frac{m''}{2Cn}$.

Since the number of triangles in $\mathcal{T}_4(G)$ containing $x$ is at most $C d_{G^*}(x)$, we derive that $d_{G^*}(x) \geq \frac{m''}{3Cn}$ for every $x\in X'$.
Then, the vertices $w_3, \dots, w_{t^3}$ can be chosen from $N_{G^*}(x)$ progressively. 
Recall that $m''\ge \frac{\varepsilon n^2}{54}$. 
Thus,
$$ s\ge \frac{m''}{2Cn} \cdot \frac{m''}{3n}\cdot \binom{\frac{m''}{3Cn}-2}{t^3-2} =\Omega\left(\left(\frac{m''}{n}\right)^{t^3}\right)= \Omega(n^{t^3}),$$
which contradicts the earlier bound. This completes the proof of the claim.
\end{proof}

By \cref{cl:3.8}, there exists a copy of complete bipartite graph $K_{t^3,t^3}$ in $G^*$, denoted by $F$, such that $V(F)$ contains two vertices $y^* \in Y$ and $z^* \in Z$ with $y^*z^* \in E^*$.
For any edge $e$ in $F$, there exists a triangle $T$ in $G$ that contains $e$.
Let the vertex $V(T) \setminus e$ be the corresponding vertex for this edge.

\begin{claim}\label{cl:504}
There exists a matching $M$ in $F$ of size $k+1$ with $y^*,z^*\notin V(M)$ such that all edges in $M$ have the same corresponding vertex or each edge has a distinct corresponding vertex in $V_0=V(G^*) \setminus (V(M) \cup \{y^*,z^*\})$.
\end{claim}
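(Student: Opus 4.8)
The plan is to extract from the complete bipartite graph $F = K_{t^3,t^3}$ a large sub-structure that is homogeneous in the behaviour of corresponding vertices, and then to apply \cref{cl:color} to this sub-structure. First I would set up the auxiliary colouring: to each edge $e$ of $F$ assign as its ``colour'' the identity of one corresponding vertex of $e$ (each edge of $F$ lies in at least one triangle of $\mathcal{T}_4(G)$, so a corresponding vertex exists; fix a choice). Because $F$ has $t^3$ vertices on each side and is bipartite, it contains a copy of $K_{k^3,k^3}$ even after we delete the (at most two) vertices $y^*$ and $z^*$, since $t = |V(H)| \ge k$ and $t^3 - 1 \ge k^3$ comfortably for the relevant ranges; call this sub-bipartite-graph $F'$, with parts of size $k^3$, disjoint from $\{y^*,z^*\}$.

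Next I would apply \cref{cl:color} to $F'$ with the colouring above. This yields either a monochromatic matching of size $k+1$ in $F'$, or a rainbow star $K_{1,k}$ in $F'$. In the first case, discard one edge of the matching to obtain a matching $M$ of size $k$ all of whose edges share the same corresponding vertex (the common colour), and $V(M)$ avoids $\{y^*,z^*\}$ by construction, giving the first alternative of the claim with $V_0 = V(G)\setminus(V(M)\cup\{y^*,z^*\})$. In the second case, a rainbow $K_{1,k}$ gives $k$ edges $e_1,\dots,e_k$ with pairwise distinct corresponding vertices; these $k$ edges share a common centre vertex, so they do not yet form a matching. To turn the star into a matching I would instead work slightly harder: take a rainbow $K_{1,k'}$ for a larger $k'$ (applying \cref{cl:color} with a correspondingly larger complete bipartite graph, which is available since $t^3$ is much bigger than $k^3$), and then greedily select $k$ of its leaves whose corresponding vertices are all distinct from each other \emph{and} from the centre; replacing each selected edge $cv_i$ by — no, more directly, observe that a rainbow star already has the key property that the corresponding vertices are distinct, and a star is in particular a graph in which we may instead just take the $k$ leaf-edges; the subtlety is only that the claim asks for a matching. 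The cleanest fix is: since the colours (= corresponding vertices) of the $k'$ star-edges are distinct, and there are only $|V(H)|$ ``dangerous'' vertices we must avoid, pass to a sub-star of size $k$ whose corresponding vertices all lie outside $\{y^*,z^*\}\cup\{\text{centre}\}$, and then note that for the application in the subsequent path-building argument a rainbow star is exactly as useful as a rainbow matching; if a literal matching is required, re-run \cref{cl:color} after first deleting, for each star edge, its corresponding vertex from one side — this keeps the bipartite graph large enough (we lose at most $k'$ vertices per side, still leaving $\gg k^3$) and any monochromatic matching or rainbow star now automatically avoids the earlier corresponding vertices, allowing an iteration that produces a genuine $k$-matching with distinct corresponding vertices in $V_0$.

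I expect the main obstacle to be precisely this bookkeeping: converting the rainbow star produced by \cref{cl:color} into a matching with the corresponding vertices controlled, while simultaneously keeping all of the (boundedly many) special vertices — $y^*$, $z^*$, the previously chosen corresponding vertices, and the internal vertices of any paths already built — outside the relevant sets. The resolution is quantitative slack: everything we must avoid has size $O(|V(H)|)$, a constant, whereas the pigeonhole inputs to \cref{cl:color} and to \cref{thm:KST} can be taken polynomially large in $t$, so at each step a sub-bipartite-graph of size $k^3$ survives all the deletions. I would therefore state and prove the claim with this constant slack built in from the start, applying \cref{cl:color} to a copy of $K_{k^3,k^3}$ obtained \emph{after} removing $\{y^*,z^*\}$ from $F$, and phrasing the ``distinct corresponding vertices'' conclusion so that distinctness from $y^*, z^*$ and from each other both follow from the rainbow property together with one final pigeonhole to excise at most a constant number of bad leaves.
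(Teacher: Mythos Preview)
Your route via \cref{cl:color} is different from the paper's, and it has a genuine gap in the rainbow case.

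The paper does not invoke \cref{cl:color} at all here. Instead it argues directly by maximality: take $M$ to be a \emph{maximal} matching in $F$ avoiding $\{y^*,z^*\}$ whose edges have pairwise distinct corresponding vertices lying in $V_0$. If $|M|\ge k$ we are done with the second alternative. Otherwise every edge of the large remaining complete bipartite graph $F[V(F)\setminus(V(M)\cup\{y^*,z^*\})]$ has its corresponding vertex confined to the bounded set $V(M)\cup\{y^*,z^*\}\cup\{\text{corresponding vertices of }M\}$, of size at most $3k+1$; pigeonhole on roughly $t^6$ edges then yields a single colour class with far more than $k\cdot t^3$ edges, hence (by K\H{o}nig) a monochromatic matching of size $k$. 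This two-line extremal argument replaces the whole colouring lemma.

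Your proposal, by contrast, feeds the corresponding-vertex colouring into \cref{cl:color}, which in the rainbow outcome returns a \emph{star} $K_{1,k}$, not a matching. The claim, and the downstream path-building in Subcase~2.2, genuinely require a matching: the vertices $u_1,\dots,u_k,v_1,\dots,v_k$ must be distinct so that the paths $P_i$ can be routed through disjoint edges $u_iv_i$. Saying that ``a rainbow star is exactly as useful'' is therefore not correct here, and your suggested iterations (re-running \cref{cl:color} after deleting earlier corresponding vertices) do not come with a mechanism that ever converts successive stars into a single matching; each re-application may again land in the star outcome, centred at a new vertex, and nothing accumulates toward independence of the edges. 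To rescue this line you would need a genuine strengthening of \cref{cl:color} that outputs a rainbow \emph{matching} of size $k$ (or a monochromatic matching), which is a different lemma and not available in the paper. The greedy-maximal argument above sidesteps this entirely.
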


\begin{proof}

Let $M$ be a maximal matching in $F$ with $y^*,z^*\notin V(M)$ such that each edge has a distinct corresponding vertex in $V_0$. 
Then the corresponding vertex of each edge in $V_0\cap V(F)$ will belong to $V(M) \cup \{y^*,z^*\}$ or equal to the corresponding vertex of some edges in $M$.
If $|M| \ge k+1$, then we are done.
Suppose $|M| \le  k$, then by the pigeonhole principle, there exists a vertex such that it serves as the corresponding vertex for at least $k+1$ independent edges in $F$.
\end{proof}

By \cref{cl:504}, we distinguish two cases:

\medskip

\noindent\textbf{Subcase 2.1.} There exists a matching $M$ in $F$ of size $k+1$ such that all edges in $M$ have the same corresponding vertex.
\vspace{0.2cm}

The proof of this case is same as the the proof of Subcase 1.1.

\vspace{0.2cm}
\noindent\textbf{Subcase 2.2.} There exists a matching $M$ in $F$ of size $k+1$ such that each edge has a distinct corresponding vertex in $V_0$.
\vspace{0.2cm}

    Suppose $M' = \{v_1u_1,\cdots, v_{k}u_{k}\}\subseteq M$ such that $u_1,\cdots u_{k},z^*,y^*$ are in the same part of $F$. 
    Let $x_i$ be the corresponding vertices of $v_iu_i$ for $i\in [k]$.
    We will show that we can progressively find $k$ pairwise internally disjoint paths $P_i$ with end vertices $y^*,z^*$ and length $p_i$.
    Let $V_1= \{x_1,x_2,\cdots x_k\} \cup V(M)$.
    Recall that we have $p_1 = 1, p_2=p_3 =2$ and $p_i \ge 2, p_i \ne 3$ for all $i\ge 2$.
    \begin{itemize}
    \item If $p_i=1$, choose the edge $y^*z^*$. 
    
    \item If $p_i$ is even, we can find a path with end vertices $y^*,z^*$ and length $p_i$ in $F$ which is internally disjoint with $V_1$ and all the paths we have selected.
     
    \item If $p_i\ge 5$ and $p_i$ is odd, choose an edge $u'v'\in M, v'\in \{v_1,\cdots v_{k}\}$ which has not been chosen, yet. 
    Let $x'\in\{x_1,x_2,\cdots x_k\}$ be the corresponding vertex of $v'u'$.
    Note that we can find a path $L$ with end vertices $z^*,u'$ and length $p_i-3$ in $F$ which is internally disjoint with $V(M)$ and all the paths we have selected. Hence, $Lx'v'y^*$ is a path with end vertices $z^*,y^*$ and length $p_i$.
    \end{itemize}

    Thus, $G$ contains a copy of $H$, a contradiction.

    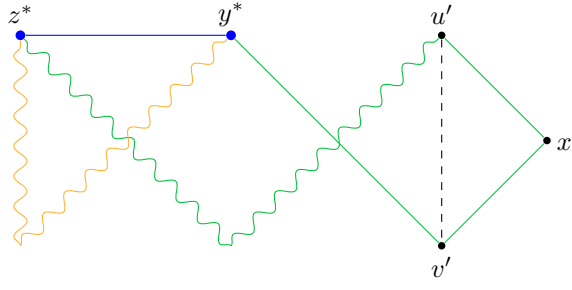
\begin{figure}[!ht]
    \centering
\begin{tikzpicture}[scale=1.4]
  \node[inner sep= 1.3pt,blue](z) at (0,2)[circle,fill]{};
  \node[inner sep= 1.3pt](z') at (0,2.2)[]{$z^*$};
  \node[inner sep= 1.3pt,blue](y) at (2,2)[circle,fill]{};
  \node[inner sep= 1.3pt](y') at (2,2.2)[]{$y^*$};
    \node[inner sep= 1pt](v) at (4,0)[circle,fill]{};
 \node[inner sep= 1pt](v_1') at (4,-0.2)[]{$v'$};
   \node[inner sep= 1pt](u) at (4,2)[circle,fill]{};
  \node[inner sep= 1pt](u1) at (4,2.2)[]{$u'$};
 \node[inner sep= 1pt](x) at (5,1)[circle,fill]{};
  \node[inner sep= 1pt](x1) at (5.2,1)[]{$x'$};
  \draw[blue](z)--(y);
  \draw[dashed](u)--(v);
  \draw [darkpastelgreen](x) -- (u);
  \draw [darkpastelgreen](x) -- (v);
   \draw [darkpastelgreen](y) -- (v);
  \draw [decorate, decoration=snake, segment length=4mm,darkpastelgreen](2,0) -- (z);
  \draw [decorate, decoration=snake, segment length=4mm,darkpastelgreen](2,0) -- (u);
   \draw [decorate, decoration=snake, segment length=4mm,myorange](0,0) -- (z);
  \draw [decorate, decoration=snake, segment length=4mm,myorange](0,0) -- (y);
  
\end{tikzpicture}
\caption{The illustration of subcase 2.2.}
\end{figure}

\vspace{0.2cm}
\noindent\textbf{Case 3.} $H$ contains $\Theta(1,2,2,3)$ as a subgraph.
\vspace{0.2cm}

By \cref{cor:theta}, we have $\mathrm{ex}(n, K_3, H) = O(n^2)$. Moreover, let $G_0$ be the graph obtained from $K_{\lfloor \frac{n}{2}\rfloor,\lceil\frac{n}{2}\rceil}$ by adding a maximal matching in one part.
It is easy to check that $G_0$ is $H$-free, and the number of triangles in $G_0$ is at least $2\lfloor 
 n/4\rfloor^2$, which implies $\mathrm{ex}(n, K_3, H) = \Omega (n^2)$.

This finishes the proof of \cref{thm:jun2}. \end{proof}

\section{Generalized Tur\'an number of disjoint subgraphs}

In this section, we will prove \cref{thm:main}. First we prove the following stability lemma for edge-critical graphs with $\chi(F)=3$.

\begin{lemma}\label{lmm:edge-critical}
  Let $F$ be an edge-critical graph with $\chi(F)=3$.
  For any $\varepsilon>0$, there exists $n_0=n_0(F, \varepsilon) \in \mathbb{N}$ such that the following holds.
  If $G$ is an $F$-free graph on $n \geq n_0$ vertices with $e(G)\geq n^2/4-\varepsilon^2 n^2$, then there exists an induced subgraph $G^\star \subseteq G$ with $|G^\star|\ge (1-2\varepsilon)n$, such that $G^\star$ is a bipartite graph with bipartition $A\cup B$ satisfying $|A|,|B|\geq \delta(G^\star)\ge  \frac{(1-2\varepsilon)^2}{2}n$.
\end{lemma}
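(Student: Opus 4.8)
The plan is to combine the Erd\H{o}s--Simonovits stability theorem with a low-degree cleanup and an embedding argument that exploits the edge-critical structure of $F$. The first thing I would record is the structure of $F$: since $F$ is edge-critical with $\chi(F)=3$, there is an edge $e_0=xy\in E(F)$ with $\chi(F-e_0)=2$, and moreover $F-e_0$ must be connected with $x,y$ in the \emph{same} class of its (unique up to swap) proper $2$-colouring, for otherwise adding $e_0$ back would leave $F$ bipartite. Fixing such a colouring with classes $S\ni x,y$ and $T$, and writing $h:=|V(F)|$, this yields the following embedding tool: if $(X,Y)$ are disjoint vertex sets in a graph with $|Y|\ge 2h$, if the set $X^\circ$ of vertices of $X$ having at most $|Y|/(2h)$ non-neighbours in $Y$ has at least $h$ elements, and if $X^\circ$ spans an edge, then the graph contains $F$ (map $e_0$ onto that edge, the rest of $S$ into $X^\circ$ arbitrarily, then embed $T$ into $Y$ greedily, using that each already-placed image misses at most $|Y|/(2h)$ of $Y$).

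With this in hand I would proceed as follows. \emph{(i)} Apply \cref{Erdos-Simon} with $r=2$ and parameter $\asymp 2\varepsilon^2$ (legitimate because $e(G)\ge n^2/4-\varepsilon^2n^2\ge(1-\tfrac12-2\varepsilon^2)\binom n2$ for large $n$) to obtain a bipartition $V(G)=A_0\cup B_0$ with $|A_0|,|B_0|=n/2\pm1$ and at most $O(\varepsilon)n^2$ edges inside the two parts. \emph{(ii)} Repeatedly move any vertex to the side where it has fewer neighbours; this terminates with a partition $(A_1,B_1)$ having no more intra-part edges than $(A_0,B_0)$ and in which every vertex has at most half its neighbours in its own part. \emph{(iii)} Repeatedly delete any vertex of current degree below a threshold $\tau$ slightly below $n/2$; using that an $F$-free graph on $m\ge m_0(F)$ vertices has at most $\lfloor m^2/4\rfloor$ edges (\cref{thm:simonovits}), one shows the resulting graph $G_1$ keeps $(1-O(\varepsilon))n$ vertices and satisfies $\delta(G_1)\ge\tau$ --- the key point being that $\delta(G_1)\ge\tau$ forces $\tau|G_1|/2\le e(G_1)\le|G_1|^2/4$, hence $|G_1|\ge2\tau$. \emph{(iv)} Apply the embedding tool to $(A_1,B_1)$ inside the $F$-free graph $G_1$: the set $A_1^\circ$ of vertices of $A_1$ with few non-neighbours in $B_1$ (and symmetrically $B_1^\circ$) must be independent, so deleting the remaining ``bad'' vertices leaves an induced bipartite subgraph, whose two sides inherit large mutual degree from membership in $A_1^\circ$ and $B_1^\circ$. \emph{(v)} Finally combine $e(G)\ge n^2/4-\varepsilon^2n^2$ with the (by now controlled) number of deleted vertices and intra-part edges to force the two side sizes and the minimum degree of the bipartite subgraph to be at least $\tfrac{(1-\varepsilon)^2}{2}n$, possibly after one last round of low-degree deletion performed inside the bipartite subgraph; that $|A|,|B|\ge\delta(G^\star)$ is then automatic since a vertex on one side has degree at most the size of the other.

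The main obstacle is the mismatch of scales. Erd\H{o}s--Simonovits, applied to a graph with only an $\varepsilon^2 n^2$ edge deficit, directly gives control only at the $\Theta(\sqrt\varepsilon)\,n$ scale --- the number of ``bad'' vertices and the imbalance $\big||A_1|-|B_1|\big|$ are a priori only $O(\sqrt\varepsilon)n$ --- whereas the conclusion $\tfrac{(1-\varepsilon)^2}{2}n=\tfrac n2-\varepsilon n+\tfrac{\varepsilon^2}{2}n$ lives at the $O(\varepsilon)n$ scale. Bridging this gap is precisely where $F$-freeness has to be used quantitatively: step (iv) confines all intra-part edges to the set of ``bad'' vertices, which caps the relevant imbalance, and one must then tune the deletion threshold $\tau$ in step (iii), the non-neighbour threshold in the definition of $A_1^\circ$, and the order of the cleanup operations so that the total loss never exceeds $\varepsilon^2n^2$ edges (equivalently $O(\varepsilon)n$ vertices), possibly iterating steps (ii)--(iv) to upgrade ``bipartite after deleting $\Theta(\sqrt\varepsilon)n$ vertices'' to ``bipartite after deleting $O(\varepsilon)n$ vertices.'' I expect this bookkeeping --- choosing the constants and the order of operations so every accumulated error term comes out below the target --- rather than any single conceptual ingredient, to be the real crux of the proof.
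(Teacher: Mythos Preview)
Your ingredients are the right ones --- Erd\H{o}s--Simonovits stability, a minimum-degree cleanup, and an embedding via $F\subseteq K_{|F|,|F|}^{+}$ --- but the paper orders them differently, and that reordering is exactly what dissolves the scale-mismatch problem you identify as the crux. The paper performs the low-degree deletion \emph{first}, with a relative threshold: repeatedly delete any vertex of degree below $(1/2-\varepsilon)$ times the current order. If this process ran past $m=(1-2\varepsilon)n$ vertices, then $e(G_m)>n^2/4-\varepsilon^2 n^2-2\varepsilon n\cdot(1/2-\varepsilon)n=m^2/4$, contradicting \cref{thm:simonovits}; so one obtains $G_t$ with $t\ge(1-2\varepsilon)n$ and $\delta(G_t)\ge(1/2-\varepsilon)t$ \emph{before} invoking stability at all. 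Only then is stability applied, to $G_t$, and here the paper uses a trick you do not mention: if some vertex $v$ had at least $\sqrt[4]{100\varepsilon}\,t$ neighbours on its own side of the min-cut bipartition (and hence on both sides), then $G[N(v)]$ would be $K_{|F|,|F|}$-free, so by K\H{o}v\'ari--S\'os--Tur\'an the bipartite graph between $N(v)\cap A$ and $N(v)\cap B$ is $o(n^2)$-dense, forcing more missing cross-edges than the edge count allows. Thus \emph{every} vertex of $G_t$ has small same-side degree; combined with the high minimum degree, any intra-part edge then yields $K_{|F|,|F|}^{+}\supseteq F$ by your embedding tool. Hence $G_t$ is already bipartite, with no further deletion.

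So the step you call (iii) comes first and delivers the $O(\varepsilon)n$-scale bounds directly via Simonovits, your step (iv) gives bipartiteness outright rather than after removing a bad set, and no iteration or bootstrapping between $\sqrt{\varepsilon}$ and $\varepsilon$ scales is needed. The K\H{o}v\'ari--S\'os--Tur\'an argument on neighbourhoods is the one quantitative idea missing from your outline; it is what replaces the bookkeeping you anticipate.
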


\begin{proof}
  Assume $1/2 \ge \varepsilon  > 0$  and let $G_n=G$. 
  Assuming $G_j$ is defined, if there exists $v_j\in V(G_j)$ with $d_{G_j}(v_j)<(1/2-\varepsilon) j$, let $G_{j-1}=G_j-\{v_j\}$.
  This process concludes at $G_t$ with $\delta(G_t)\geq (1/2-\varepsilon) t$ and $|G_t|=t$.
  We will now demonstrate that $t\geq (1-2\varepsilon)n$.
  Let $m=(1-2\varepsilon)n$.

  In the case where $t<m$, we have
\begin{align*}
  e(G_m) & = e(G)-\sum_{j=m+1}^n d_{G_j}(v_j) >\frac{n^2}{4}-\varepsilon^2 n^2- \sum_{j=m+1}^n \left(\frac{1}{2}-\varepsilon\right)j \\
  & >\frac{n^2}{4}-\varepsilon^2 n^2 - (n-m) \left(\frac{1}{2}-\varepsilon\right)n=\frac{n^2}{4}-\varepsilon^2 n^2 - 2\varepsilon n\left(\frac{1}{2}-\varepsilon\right)n\\
  & = \frac{(1-4\varepsilon+4\varepsilon^2)n^2}{4} = \frac{m^2}{4}.
\end{align*}
  \cref{thm:simonovits} implies that $G_m\subseteq G$ contains $F$, a contradiction.

  Since $\delta(G_t)\geq (1/2-\varepsilon)t$, we have
\begin{align}\label{Gt}
  e(G_t)\ge \frac{(1/2-\varepsilon)t^2}{2}\ge (\frac{1}{2}-\varepsilon)\binom{t}{2}.\tag{$\spadesuit$}
\end{align}
  Then there is a balanced bipartition $V(G_t)=V_1\cup V_2$ with $e(V_1)+e(V_2)\le 8\sqrt{\varepsilon} t^2$ from \cref{Erdos-Simon}.

  Let $A\cup B$ be a bipartition of $G_t$ that minimizes $e(A)+e(B)$.
  Then $e(A)+e(B)\le 8\sqrt{\varepsilon} t^2$.
  Furthermore, we have 
\begin{align*}
  (1/2-3\sqrt[4]{\varepsilon})t\le |A|,|B| \le (1/2+3\sqrt[4]{\varepsilon})t.
\end{align*}
  Otherwise,
\begin{align*}
  e(G_t)\le |A||B|+e(A)+e(B) < t^2/4-9\sqrt{\varepsilon} t^2+ 8\sqrt{\varepsilon} t^2<t^2/4- \varepsilon t^2/2.
\end{align*}
  This  contradicts  (\ref{Gt}).

  Let $A'=\{v\in A:~|N(v)\cap A|\ge \sqrt[4]{100\varepsilon} t\}$ and $B'=\{v\in B:~|N(v)\cap B|\ge \sqrt[4]{100\varepsilon} t\}$.
  
\begin{claim}\label{cl:emptyAB}
    $A'=B'=\varnothing$.
\end{claim}

\begin{proof}
  Assume that $v\in A'$.
  By the definition of $A$, $|N(v)\cap B|\ge |N(v)\cap A|\ge \sqrt[4]{100\varepsilon}t$.  
  Let $K_{a,b}^{+}$ be the graph obtained by adding one edge to $K_{a,b}$.
  Since every edge-critical graph $F$ with $\chi(F)=3$ is a subgraph of $K_{|F|,|F|}^{+}$, $N(v)$ is $K_{|F|,|F|}$-free.
  Therefore, by \cref{thm:KST}, the number of edges between $N(v)\cap A$ and $N(v)\cap B$ is $o(t^2)$.
  That is, the number of missing edges between $N(v)\cap A$ and $N(v)\cap B$ is at least $\sqrt{100\varepsilon} t^2-o(t^2)>9\sqrt{\varepsilon} t^2$.
  Then
\begin{align*}
  e(G_t)< |A||B|+e(A)+e(B)-9\sqrt{\varepsilon} t^2\le t^2/4-\sqrt{\varepsilon} t^2<t^2/4- \varepsilon t^2/2.
\end{align*}
 This contradicts (\ref{Gt}).
\end{proof}
  By \cref{cl:emptyAB}, for any vertex $v\in A$, $|N(v)\cap A|< \sqrt[4]{100\varepsilon} t$.
  If $e(A)>0$, choose a subset $T\subseteq A$ with $e(T)>0$ and $|T|=|F|$.
  For any $v\in T$, $v$ is non-adjacent to at most 
\begin{align*}
  |B|-(\delta(G_t)-\sqrt[4]{100\varepsilon} t)\le \sqrt[4]{200\varepsilon}t
\end{align*}
  vertices in $B$.
  Let 
  \[T'= \left(\bigcap_{v\in T}N(v)\right)\cap B.\]
  Then $|T'|\geq |B|-|F|\sqrt[4]{200\varepsilon}t>|F|$, which means $G[T\cup T']$ contains $K^+_{|F|,|F|}$ as subgraph, a contradiction.
  Thus, $A$ is an independent set.
  Similarly, $B$ is an independent set.
  Moreover, $|A|,|B|\ge \delta(G_t)\ge \frac{(1-2\varepsilon)^2}{2}n$.
\end{proof}

  We denote by $\Theta(p_0;p_1,\ldots,p_{t-1})$ an edge-critical generalized theta graph such that $p_0$ and $p_i$ have different parity for all $i\in[t-1]$.
  Suppose $n$ is sufficiently large. 
  For any edge-critical generalized theta graph $F$, \cref{thm:simonovits} implies that for any $F$-free graph $G$ on $n$ vertices, we have $e(G)\le \lfloor n^2/4\rfloor$.
  The following lemma provides a stronger result and plays a crucial role in the proof of \cref{thm:main}.

\begin{lemma}\label{lmm:theta}
  Let $t\geq 2$ and $F=\Theta(p_0;p_1,\ldots,p_{t-1})$ be an edge-critical generalized theta graph.
  Let $c$ be a constant and $n \ge n_0=n_0(F,c)$ sufficiently large. 
  Let $r\geq 3$ and $G$ be an $F$-free graph on $n$ vertices, then
\begin{align*}
  e(G)+c|\mathcal{K}_r(G)| \leq\lfloor n^2/4\rfloor,  
\end{align*}
and the equality holds if and only if $G=T_2(n)$.
\end{lemma}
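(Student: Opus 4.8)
The plan is to argue that if $G$ is $F$-free with $e(G) + c\mathcal{N}_r(G)$ close to $\lfloor n^2/4\rfloor$, then $G$ is essentially the Turán graph $T_2(n)$, and any deviation that creates an $r$-clique (forcing $\mathcal{N}_r(G)\ge 1$, with $r\ge 3$) costs at least one edge, which cannot be compensated. First I would dispose of the case where $\mathcal{N}_r(G)=0$: then the inequality is just $e(G)\le\lfloor n^2/4\rfloor$, which is immediate from \cref{thm:simonovits} since $F$ is edge-critical with $\chi(F)=3$; and equality forces $G=T_2(n)$, again by \cref{thm:simonovits}. So from now on assume $\mathcal{N}_r(G)\ge 1$, and suppose for contradiction that $e(G)+c\mathcal{N}_r(G)\ge\lfloor n^2/4\rfloor$, so in particular $e(G)\ge\lfloor n^2/4\rfloor - c\mathcal{N}_r(G)$; since by \cref{cor:theta} we have $\mathcal{N}_r(G)\le C n^2$, we can at least say $e(G)\ge n^2/4 - Cc\, n^2$, but this is too weak, so instead I would first use a counting bound: any triangle (in particular a vertex of an $r$-clique) together with the structure below shows $\mathcal{N}_r(G)$ is not too large compared to $n$, which is the step that needs care.

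The core structural step is to apply \cref{lmm:edge-critical} with a suitable $\varepsilon=\varepsilon(c)$ chosen small: since $e(G)\ge n^2/4-\varepsilon^2 n^2$ once $\mathcal{N}_r(G)$ is shown to be $o(n^2)$ — and I would establish $\mathcal{N}_r(G)=o(n^2)$ separately, or alternatively run the whole argument with the weaker hypothesis $e(G) \geq n^2/4 - \varepsilon^2 n^2$ — we obtain an induced subgraph $G^\star$ on at least $(1-2\varepsilon)n$ vertices that is bipartite with parts $A,B$, each of size and minimum degree at least $\frac{(1-\varepsilon)^2}{2}n$. Let $W=V(G)\setminus V(G^\star)$, so $|W|\le 2\varepsilon n$. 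Now I would carefully bound $e(G)$ and $\mathcal{N}_r(G)$ in terms of the edges incident to $W$ and the few edges inside $A$ or inside $B$. Writing $e(G)=|A||B| - (\text{missing }A\text{-}B\text{ edges}) + e(A)+e(B)+e(W,V(G^\star))+e(W)$, and $|A||B|\le\lfloor n^2/4\rfloor$ with a deficit quadratic in $|W|$ unless $W$ is distributed to balance the parts, the key inequality to prove is: every $r$-clique in $G$ ($r\ge 3$) "uses up" structure that is already counted as a deficit, so that $c\mathcal{N}_r(G)$ cannot recover it.

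Concretely, the main claim I would prove is that $\mathcal{N}_r(G) \le$ (some linear-in-$n$ multiple of) the total number of "bad" edges — edges inside $A$, inside $B$, or incident to $W$ — because $F$-freeness (via $N(v)$ being $K_{|F|,|F|}$-free, as used in the proof of \cref{lmm:edge-critical}) bounds how many cliques any single bad edge can be in; and simultaneously that $e(G) \le \lfloor n^2/4\rfloor - (\text{number of bad edges}) + (\text{lower-order})$. Then $e(G)+c\mathcal{N}_r(G) \le \lfloor n^2/4\rfloor - (1 - c\cdot(\text{clique-multiplicity bound}))\cdot(\text{bad edges}) + \text{l.o.t.}$, and choosing the parameters so that the coefficient is positive gives a contradiction with having any bad edge at all — but an $r$-clique with $r\ge 3$ contains a triangle, which in a graph close to bipartite must involve a bad edge, contradiction. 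The equality analysis then forces no bad edges, i.e. $G\subseteq K_{|A|,|B|}$ with balanced parts and no vertices removed, hence $G=T_2(n)$.

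\medskip

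The hard part will be the quantitative tradeoff: showing that the clique count $\mathcal{N}_r(G)$ is dominated by a quantity strictly smaller than the edge-deficit it is multiplied against, \emph{uniformly in the constant $c$} (the lemma allows $c$ arbitrary, with $n_0$ depending on $c$). The bound on cliques per bad edge comes from $K_{|F|,|F|}$-freeness of neighborhoods, giving $\mathcal{N}_r(G) = O_F(n^{2-1/|F|}\cdot|W| + \ldots)$ — subquadratic — while a single bad edge near an otherwise-balanced complete bipartite graph costs $\Theta(1)$ in the edge count but can in principle lie in $\Theta(n^{r-2})$ cliques; so the delicate point is that $F$-freeness caps clique sizes and multiplicities enough that $c\,\mathcal{N}_r(G)$ stays below the deficit. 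Managing this, and the bookkeeping of how $W$ should be (optimally) split between $A$ and $B$ to maximize $|A||B|$, is where the real work lies; everything else is an application of Simonovits, Kővári–Sós–Turán, and \cref{lmm:edge-critical}.
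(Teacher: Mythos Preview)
Your outline has a genuine gap in the ``bad-edge'' bookkeeping. The inequality $e(G)\le\lfloor n^2/4\rfloor-(\text{bad edges})+(\text{l.o.t.})$ does not follow from what you have: edges incident to $W$ are \emph{added} to $e(G^\star)$, and the only deficit you can read off from the bipartite structure is $|A||B|\le (n-|W|)^2/4\approx n^2/4-n|W|/2$, which is bounded \emph{above} by the number of bad edges, not below by it. So your final line $e(G)+c\,\mathcal N_r(G)\le\lfloor n^2/4\rfloor-(1-c\cdot C)\cdot(\text{bad edges})$ does not assemble; there is no mechanism in your sketch forcing the presence of bad edges to create a matching edge-deficit. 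You are also vague on two prerequisites that are not free: (i) why $\mathcal N_r(G)=o(n^2)$, which you need before you may invoke \cref{lmm:edge-critical}---the paper obtains this from \cref{thm:jun2}, using that an edge-critical theta graph never contains $\Theta(1,2,2,3)$; and (ii) why each bad edge lies in only boundedly many cliques, which really needs that $G[N(v)]$ is $T$-free for the tree $T=F-\{\text{root}\}$ (giving $e(G[N(v)])=O(d(v))$), a stronger fact than the $K_{|F|,|F|}$-freeness you cite.

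The paper's route is different in kind: rather than trading cliques against an edge deficit, it shows that an extremal $G$ is in fact \emph{triangle-free}, after which the statement is just Mantel. The chain is: reduce to $r=3$ via \cref{lem:k-tree}; use \cref{thm:jun2} to get $\mathcal N_3(G)=o(n^2)$ and hence $e(G)\ge n^2/4-\varepsilon^2 n^2$; apply \cref{lmm:edge-critical}; then (a) boost the minimum degree to $n/(2q+1)$ by replacing any low-degree vertex with a new vertex joined to one side (tree-freeness of neighbourhoods guarantees this strictly increases $\phi=e+c\mathcal N_3$), (b) show $G$ is $B_s$-free for a bounded $s$ (a large book plus its many common neighbours in $A\cup B$ yields $K^+_{|F|,|F|}\supseteq F$), and (c) observe that in any triangle $d(u)+d(v)+d(w)\le n+3s$, so at least two vertices have degree at most $(\tfrac12-\delta)n$; counting triangles by such low-degree pairs (each pair in at most $s$ triangles) and comparing with Simonovits applied to $G-L$ forces the low-degree set $L$ outside $G^\star$ to be empty, whence $G$ is triangle-free. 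The crucial ingredient your plan lacks is this minimum-degree boost together with the book/degree-sum pinch; without it I do not see how to close the trade-off you propose.
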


\begin{proof}
 By \cref{lem:k-tree}, we have $|\mathcal{K}_{r+1}(G)| \le C_F \cdot  |\mathcal{K}_r(G)|$ for some constant $C_F $, so it is sufficient to show that \cref{lmm:theta} holds for $r=3$.
Define $\phi(G)=e(G)+c|\mathcal{K}_3(G)|.$ 
Let $G$ be an $F$-free graph with $n$ vertices that maximizes $\phi(G)$.
Since $T_2(n)$ is $F$-free,  $\phi(G)\geq \phi(T_2(n))=\lfloor n^2/4\rfloor$.
Let $\varepsilon>0$ be sufficiently small.
Since every edge-critical generalized theta graph does not contain $\Theta(1,2,2,3)$ as a subgraph, by \cref{thm:jun2}, we have $|\mathcal{K}_3(G)|=o(n^2)$, implying
\begin{align*}
  e(G)=\phi(G)-c|\mathcal{K}_3(G)|>n^2/4-\varepsilon^2 n^2
\end{align*} when $n$ sufficiently large.

By \cref{lmm:edge-critical}, there exists an induced subgraph $G^\star \subseteq G$ with $|G^\star|\ge (1-2\varepsilon)n$, such that $G^\star$ is a bipartite graph with  bipartition $A\cup B$ satisfying $|A|,|B|\ge \delta(G^\star)\ge \frac{(1-2\varepsilon)^2}{2}n$.

Let $T$ be the tree obtained by deleting a root vertex of $F$. By \cref{cor:tree}, there is a constant $C_0$ such that $\ex(n,K_{r-1},T)\leq C_0n$. The following claim shows that there is no vertex with few neighbours in $G$.

\begin{claim}\label{cl:linear-deg}
Let $q=1+c\cdot C_0$.
For any $v\in V(G)$, $d_G(v)\geq \frac{n}{2q+1}$.
\end{claim}

\begin{proof}
For any vertex $v\in V(G^\star)$, $d_G(v)\geq \frac{(1-2\varepsilon)^2}{2}n$.
Now consider a vertex $v\in V(G)\backslash V(G^\star)$.
Since $G$ is $F$-free, $G[N(v)]$ is $T$-free.
By \cref{cor:tree}, the number of edges in $G[N(v)]$ is at most $C_0\cdot d_G(v)$.
Define $d_3(G,v)$ as the number of triangles containing $v$ and $d_G^*(v)=d_G(v)+cd_3(G,v)$, then 
\begin{align*}
  d^*_{G}(v)\leq d_G(v)+c\cdot C_0d_G(v)=qd_G(v).
\end{align*}

  If $d(v)<n/(2q+1)$, we can construct an $F$-free graph $G'$ with  $\phi(G')>\phi(G)$, leading to a contradiction.
  Delete $v$ from $G$, and add a new vertex $v'$ and edges between $v'$ and all the vertices in $B$.
  Let $G'$ be the resulting graph.
  Note that 
\begin{align*}
d^*_{G}(v)\leq qd_G(v)<\frac{qn}{2q+1}<|B|.
\end{align*}
  Thus, we have
\begin{align*}
  \phi(G')= \phi(G)-d_G^*(v)+|B|>\phi(G).
\end{align*}

We now show that $G'$ is $F$-free, which contradicts the maximality of $G$.
Suppose that $G'$ contains a copy of $F$, say $F^*$, then $F^*$ must contain $v'$.
Let $w_1,\ldots,w_b$ be the neighbours of $v'$ in $F^*$.
Then $\{w_1,\ldots,w_b\}$ is a subset of $B$, hence $|N(w_i)\cap A|>(1/2-2\varepsilon)n$ for each  $i\in[b]$.
That is, we can select a vertex $v''\in N(w_1)\cap\cdots\cap N(w_b)\cap (A\backslash V(F^*))$ to replace $v'$, which means we can find a copy of $F$ in $G'-\{v'\}\subseteq G$.
Hence we are done.
\end{proof}

The next claim shows that we can not find a large book in $G$.

\begin{claim}
 $G$ is $B_{s}$-free, where $s=|F|(2q+2)^{|F|}$.
\end{claim}

\begin{proof}
  Suppose $G$ contains a copy of $B_s$ and let $S$ be the set of page vertices of $B_s$.
  Consider the subset $R \subseteq A \cup B$ chosen uniformly at random with $|R|=|F|$. 
  Since $|A\cup B| \ge (1-2\varepsilon) n$ and $|N(v)|\ge \frac{n}{2q+1}$ by \cref{cl:linear-deg}, we derive that $|N(v) \cap  (A\cup B)| \ge (\frac{1}{2q+1}-2\varepsilon)n$.
  So the probability that $v$ belongs to $N(R)$, the common neighborhood of $R$, is
  $$\mathbf{P}[v\in N(R)]\ge \binom{(\frac{1}{2q+1}-2\varepsilon)n}{|F|}/\binom{n}{|F|}\ge (2q+2)^{-|F|}.$$
  Let $X$ be the random variable representing the number of common neighbours of $R$ in $S$. 
  By the linearity of expectation,
  $$\mathbf{E}[X]\ge s(2q+2)^{-|F|}= |F|.$$
  Thus there exists a subset $R$ that has $|F|$ common neighbours in $S$.
  The book $B_{|F|}\subset B_{s}$ and $R$ together form a copy of $K_{|F|,|F|+2}^+$, which contains every edge-critical graph on $|F|$ vertices, a contradiction.
\end{proof}

Now we can also show that there is no triangle in $G$.

\begin{claim}\label{cl:xiaowoniufans}
$G$ is triangle-free.
\end{claim}

\begin{proof}
  We first estimate the sum of the degrees of the vertices in a triangle.
  Since $G$ is $B_{s}$-free, for any edge $uv\in G$, $|N(u)\cap N(v)|< s$. 
  Thus, for any triangle $uvw\in G$, we have
  \[d(u)+d(v)+d(w)\le n+|N(u)\cap N(v)|+|N(u)\cap N(w)|+|N(v)\cap N(w)|\le n+3s. \tag{$\clubsuit$}\]

  Let $L$ be the set of vertices $v$ in $V(G)\backslash V(G^\star)$ such that $d(v)\le (\frac{1}{2}-\frac{1}{8q+4})n$. 
  Let $\ell=|L|$ and $G'=G-L$. Since $G'$ is $B_{s}$-free, we have $e(G')\le \frac{1}{4}(n-\ell)^2$.
  Thus
\begin{align*}
e(G)&\le e(G')+(\frac{1}{2}-\frac{1}{8q+4})n\ell\\
&\le  \frac{1}{4}(n-\ell)^2+(\frac{1}{2}-\frac{1}{8q+4})n\ell \quad (\text{by \cref{thm:simonovits}} )\\
&\le \frac{1}{4}n^2+(\varepsilon-\frac{1}{8q+4})n\ell. \quad(\ell\le |V(G)\backslash V(G^\star)|\le 2\varepsilon n)
\end{align*}
 Moreover, for any triangle $uvw$ in $G$, at least two of $u,v,w$ belong to $L$. Indeed, assume that $u,v\notin L$,  by \cref{cl:linear-deg},
\[d(u)+d(v)+d(w)\ge 2(\frac{1}{2}-\frac{1}{8q+4})n+\frac{n}{2q+1}\ge (1+\frac{1}{4q+2})n>n+3s,\]
a contradiction to $(\clubsuit)$, where the last inequality holds since $n$ is sufficiently large. Assume $u,v\in L$. Note that $uvw$ is a triangle, $u$ and $v$ are adjacent. Since $G$ is $B_{s}$-free, $|N(u)\cap N(v)|< s$.
Thus, the number of triangles which contain edge $uv$ is at most $s-1$. Then
\begin{align*}
  |\mathcal{K}_3(G)|\le (s-1)\binom{\ell}{2}.
\end{align*}
Because $G$ maximizes the function $\phi(G)= e(G)+c|\mathcal{K}_3(G)|$, 

\[ \lfloor n^2/4\rfloor \le \phi(G)\le \frac{1}{4}n^2+\left(\varepsilon-\frac{1}{8q+4}\right)n\ell+ \frac{s\ell^2}{2},\]
which implies that

\[-1\le\left((1+s)\varepsilon-\frac{1}{8q+4}\right)n\ell,  \quad(\ell\le |V(G)\backslash V(G^\star)|\le 2\varepsilon n)\]
so $\ell=0$. Hence, $G$ is triangle-free.
\end{proof}

 Thus, $\phi(G)=e(G)\leq \lfloor n^2/4\rfloor$, and the equality holds if and only if $G=T_2(n)$ by Mantel's Theorem.
\end{proof}

Now we can start to prove the  \cref{thm:main}.

\begin{proof}[\textbf{Proof of \cref{thm:main}}]
Let $G$ be a $kF$-free graph with $n$ vertices that maximizes $|\mathcal{K}_r(G)|$.

\medskip
\noindent{\textbf{Case 1.}} $3\le r\le k+1$.

\vspace{0.2cm}

  Since $K_{k-1}\vee T_2(n-k+1)$ is $kF$-free, it follows that $|\mathcal{K}_r(G)| \geq |\mathcal{K}_r(K_{k-1}\vee T_2(n-k+1))|$.
  
  Let $s$ be the minimum number such that there exists a vertex subset $U \subseteq V$ of size $s$ such that $G-U$ is $F$-free. 
  Observe that $s \leq (k-1)|F|$ as if $K$ is a copy of $(k-1) F$, then $G[V \backslash V(K)]$ must be $F$-free since $G$ is $k F$-free.
  Let $G'=G-U$. 
  Note that $F$ does not contain $\Theta(1,2,2,3)$ as a subgraph, we have $|\mathcal{K}_r(G')|=o(n^2)$ by \cref{thm:jun2} and \cref{lem:k-tree}(2).
  Let $T$ be the graph obtained by deleting a root vertex from $F$. Note that $T$ is a tree.
  Define \[U_1=\{u\in U:~((k-1)|F|+1)T\subseteq G'[N(u)\cap V(G')]\}\quad \text{and}\quad U_2=U\backslash U_1.\].

\begin{claim}\label{cl:11111}
  For any $u\in U_2$, the number of $r$-cliques containing $u$ is $O(n)$.
\end{claim}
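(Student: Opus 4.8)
The plan is to reduce the count of $r$-cliques through $u$ to counting cliques of bounded size in a $T$-free graph, where \cref{cor:tree} supplies a linear upper bound. Fix $u\in U_2$ and set $N:=N_G(u)\cap V(G')$, so that $G[N]=G'[N]$. Since $u\in U_2$, the graph $G'[N]$ does not contain $(k-1)|F|+1$ pairwise vertex-disjoint copies of the tree $T$; using the standard observation that a maximal family of pairwise vertex-disjoint copies of $T$ meets every copy of $T$, there is a set $W\subseteq N$ with $|W|\le (k-1)|F|\cdot|T|=(k-1)|F|(|F|-1)$ such that $G[N\setminus W]=G'[N\setminus W]$ is $T$-free. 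Now put $A_0:=(N_G(u)\cap U)\cup W$. Since $W\subseteq V(G')$ is disjoint from $U$ and $|U|=s\le(k-1)|F|$, the set $A_0$ has size at most the constant $D:=(k-1)|F|+(k-1)|F|(|F|-1)=(k-1)|F|^2$, and $N_G(u)=A_0\cup(N\setminus W)$.

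Next I would note that every $r$-clique of $G$ containing $u$ is exactly $u$ together with an $(r-1)$-clique of $G[N_G(u)]$, so it suffices to bound the number of $(r-1)$-cliques of $G[N_G(u)]$. I would classify such a clique by the number $j\in\{0,1,\dots,r-1\}$ of its vertices lying in $A_0$: there are at most $\binom{D}{j}$ choices for those $j$ vertices, and the remaining $r-1-j$ vertices form an $(r-1-j)$-clique of the $T$-free graph $G[N\setminus W]$, which has at most $n$ vertices. By \cref{cor:tree} (and trivially when $r-1-j\le 1$), the number of $(r-1-j)$-cliques in a $T$-free graph on at most $n$ vertices is $O(n)$. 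Hence the number of $(r-1)$-cliques of $G[N_G(u)]$ with exactly $j$ vertices in $A_0$ is $O(n)$, with an implied constant depending only on $F$, $k$ and $r$; summing over the $r$ possible values of $j$ gives that the number of $r$-cliques of $G$ containing $u$ is $O(n)$, as claimed.

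I do not expect a genuine obstacle here: the only point to check carefully is that $A_0$ has constant size, which follows from the already-established bound $|U|=s\le(k-1)|F|$ together with the defining property of $U_2$, used through the maximal-disjoint-family argument to control $|W|$. Once $|A_0|$ is constant and $G[N\setminus W]$ is $T$-free by construction, the clique-splitting estimate above is routine, and nothing further about $G$ or $G'$ is needed.
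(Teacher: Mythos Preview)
Your argument is correct and follows essentially the same strategy as the paper: split $N_G(u)$ into a bounded-size part and a part where the $i$-clique count is $O(n)$ for every $i$, then sum over the possible distributions. The only real difference is that the paper takes the bounded-size part to be $U\setminus\{u\}$ and then invokes \cref{cor:tree} directly on $G'[N(u)\cap V(G')]$, which is merely $((k-1)|F|+1)T$-free rather than $T$-free, whereas you insert the standard hitting-set step to peel off a further bounded set $W$ so that the large piece is genuinely $T$-free before applying \cref{cor:tree}; your version is thus slightly more careful at that point, but the two proofs are otherwise the same.
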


\begin{proof}
  From \cref{cor:tree}, for each $1\leq i\leq r$, the number of $i$-cliques in $G'[N(u)\cap V(G')]$ is $O(n)$, since $G'[N(u)\cap V(G')]$ is $((k-1)|F|+1)T$-free.
  Recall that $|U|\leq (k-1)|F|=O(1)$.
  Hence, for each $i$, the number of $r$-cliques containing $u$, $i$ vertices in $G'$ and $r-1-i$ vertices in $U-\empty{u}$ is $O(n)$.
  That is, the number of $r$-cliques containing $u$ is $O(n)$.
\end{proof}

\begin{claim}\label{cl:cl22222}
  $|U_1|=k-1$.
\end{claim}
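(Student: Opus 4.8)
The plan is to prove the two inequalities $|U_1| \le k-1$ and $|U_1| \ge k-1$ separately, using the extremal assumption that $G$ maximizes $\mathcal{N}_r(G)$ among $kF$-free graphs together with the lower bound $\mathcal{N}_r(G) \ge \mathcal{N}_r(K_{k-1}\vee T_2(n-k+1))$.

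First I would establish $|U_1| \le k-1$. The idea is that each vertex of $U_1$ carries, inside its neighbourhood in $G'$, many vertex-disjoint copies of $T$ arranged so that each copy together with the root vertex forms a copy of $F$. Concretely, if $|U_1| \ge k$, I would greedily pick $k$ vertices $u_1,\dots,u_k \in U_1$ and, processing them one at a time, select for each $u_j$ a copy $T_j$ of $T$ inside $G'[N(u_j)\cap V(G')]$ that is vertex-disjoint from all previously chosen copies $T_1,\dots,T_{j-1}$ and from $\{u_1,\dots,u_k\}$; this is possible because each $u_j\in U_1$ has at least $((k-1)|F|+1)$ disjoint copies of $T$ in its neighbourhood, which more than exhausts the $O(|F|)$ vertices we need to avoid. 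Then $u_j$ together with $T_j$ is a copy of $F$ (here I use that $F$ is obtained from $T$ by adding back a root vertex adjacent to the appropriate endpoints of the path, and that $u_j$ is adjacent to all of $T_j\subseteq N(u_j)$ — so as long as $T$ retains both attachment points this gives $F$; if the deleted root's neighbours in $T$ are specific vertices one picks the embedding of $T_j$ accordingly). The $k$ copies are pairwise vertex-disjoint, giving a copy of $kF$ in $G$, a contradiction.

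Next I would establish $|U_1| \ge k-1$. Suppose $|U_1| \le k-2$. By minimality of $s$, deleting any $s-1$ vertices leaves a graph containing $F$; in particular $G' = G-U$ is $F$-free but $G$ is not "over-redundant". The key point is to bound $\mathcal{N}_r(G)$ from above and contradict the lower bound $\binom{k-1}{r} + \binom{k-1}{r-1}(n-k+1) + \binom{k-1}{r-2}\lfloor (n-k+1)^2/4\rfloor$, which is of order $\binom{k-1}{r-2}n^2/4$ (the dominant term, assuming $r\le k+1$ so $r-2\le k-1$). Every $r$-clique of $G$ uses some number of vertices from $U$ and the rest from $G'$. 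An $r$-clique using $j$ vertices of $U$ and $r-j$ vertices of $G'$ forces those $r-j$ vertices to form a clique in the common neighbourhood (inside $G'$) of the $j$ chosen $U$-vertices. For cliques using a vertex of $U_2$: by Claim \ref{cl:11111} each such vertex lies in only $O(n)$ many $r$-cliques, so all $r$-cliques meeting $U_2$ contribute $O(n)$. For cliques avoiding $U_2$: they use only vertices of $U_1$ (at most $|U_1|\le k-2$ of them) and of $G'$. Since $G'$ is $F$-free and $F$ is an edge-critical theta graph, \cref{lmm:theta} (applied with the appropriate constant $c$) gives $e(G') + c\,\mathcal{N}_r(G') \le \lfloor |G'|^2/4\rfloor$, and more to the point I would apply the Kővári–Sós–Turán type bounds: for any set $S$ of $i\ge 2$ vertices of $U_1$, the common neighbourhood $N_{G'}(S)$ spans a graph that is $F$-free, hence (being also $K_{|F|,|F|}$-free by edge-criticality, cf. the argument in \cref{lmm:theta}) has $o(n^2)$ edges, so it contains only $o(n^2)$ copies of $K_{r-i}$ when $r-i\ge 2$; when $r-i\le 1$ the count is trivially $O(n)$. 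Summing over all $S\subseteq U_1$ with $|S|\ge 2$ and over the trivial cases, every $r$-clique of $G$ using at least two $U_1$-vertices is counted $o(n^2)$ times in total, and those using at most one vertex of $U$ altogether number $o(n^2)$ as well since $G'$ is $F$-free (so $\mathcal{N}_r(G')\le \mathcal{N}_{r-1}(G')$-type bounds and ultimately $\mathcal{N}_2(G')=e(G')$ is the only quadratic quantity, contributing only through cliques of size $\le 2$, i.e. $o(n^2)$ for $r\ge 3$). Hence $\mathcal{N}_r(G) = o(n^2)$ if $|U_1|\le k-2$ and $r\ge 3$, contradicting $\mathcal{N}_r(G)\ge \Theta(n^2)$.

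The main obstacle I anticipate is the second inequality: making the case analysis by the size of the intersection with $U_1$ genuinely tight enough to beat the constant $\binom{k-1}{r-2}/4$ in the quadratic term — one must verify that with only $k-2$ vertices available in $U_1$, no configuration of $r$-cliques can reach the quadratic order, which ultimately rests on the fact that the common neighbourhood of any two $U_1$-vertices inside the $F$-free graph $G'$ is $K_{|F|,|F|}$-free and therefore sparse. The first inequality is more routine, amounting to a greedy disjoint-embedding argument that exploits the definition of $U_1$ with its deliberately generous threshold $((k-1)|F|+1)$. I would also make sure that the reduction "$u_j$ plus a copy of $T$ in $N(u_j)$ yields a copy of $F$" is stated carefully: $T$ is $F$ minus one root vertex $v_0$, and $v_0$ has exactly two neighbours in $F$ (the second vertices of two of the paths, or an endpoint of an edge), so one should choose the embedding of each $T_j$ so that those two attachment vertices land in $N(u_j)$, which is automatic since the whole of $T_j$ lies in $N(u_j)$.
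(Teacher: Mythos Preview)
Your argument for $|U_1|\le k-1$ is essentially the paper's: a greedy selection using the threshold $((k-1)|F|+1)$ in the definition of $U_1$ to build $k$ disjoint copies of $F$.

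The argument for $|U_1|\ge k-1$ contains a genuine error. You assert that for $S\subseteq U_1$ with $|S|\ge 2$ the graph $G'[N_{G'}(S)]$ is $K_{|F|,|F|}$-free ``by edge-criticality, cf.\ the argument in \cref{lmm:theta}''. But that argument requires the ambient graph to be $F$-free: it says that \emph{inside an $F$-free graph}, the neighbourhood of a vertex is $K_{|F|,|F|}$-free. Here the vertices of $S$ lie in $U\subseteq V(G)\setminus V(G')$, and $G$ is only $kF$-free, so nothing forbids $G'[N_{G'}(S)]$ from being a huge complete bipartite graph. Concretely, take $G=K_{k-2}\vee T_2(n-k+2)$: then $U=U_1=V(K_{k-2})$ has size $k-2$, and for any $S\subseteq U_1$ we have $G'[N_{G'}(S)]=T_2(n-k+2)$ with $\Theta(n^2)$ edges. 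This graph also has $\mathcal{N}_r(G)=\Theta(n^2)$ for $3\le r\le k$, so your conclusion ``$\mathcal{N}_r(G)=o(n^2)$ whenever $|U_1|\le k-2$'' is simply false.

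What the paper does instead is count with the correct leading constant rather than aim for $o(n^2)$. Working in $G-U_2$ (cliques touching $U_2$ contribute $O(n)$ by \cref{cl:11111}), an $r$-clique has either $\le 1$ vertex in $G'$ (at most $O(n)$ such cliques), or exactly $2$ vertices in $G'$ forming an edge together with $r-2$ vertices of $U_1$ (at most $\binom{|U_1|}{r-2}\,e(G')\le \binom{k-2}{r-2}(n-s)^2/4$ such cliques), or $\ge 3$ vertices in $G'$ (at most $o(n^2)$ by \cref{thm:jun2}, since $G'$ is $F$-free). This yields
\[
\mathcal{N}_r(G)\le \binom{k-2}{r-2}\frac{(n-s)^2}{4}+o(n^2)<\binom{k-1}{r-2}\frac{(n-k+1)^2}{4}\le \mathcal{N}_r\bigl(K_{k-1}\vee T_2(n-k+1)\bigr),
\]
where the strict inequality uses $\binom{k-2}{r-2}<\binom{k-1}{r-2}$, valid precisely because $r\le k+1$. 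You even flagged ``beating the constant $\binom{k-1}{r-2}/4$'' as the main obstacle, but your subsequent claim of $o(n^2)$ sidesteps rather than addresses it; the constant comparison is exactly where the hypothesis $r\le k+1$ enters.
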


\begin{proof}
   If $|U_1|\geq k$, we can recursively find $k$ disjoint copies of $F$, where each $F$ has a root vertex in $U_1$ and the remaining vertices in $G'$, a contradiction. 
   Indeed, assume we have found $i\le k-1$ disjoint copies of $F$. Then we have selected at most $(k-1)|F|$ vertices in $G'$. 
   Pick $u\in U_1$ which have not selected. 
   By the definition of $U_1$, $N(u)\cap G'$ contains $(k-1)|F|+1$ disjoint copies of $T$. 
   Hence, there is at least one $T$ such that its vertices do not intersect with the selected $iF$
  , so $u$ and this copy of $T$ form an $F$.

   If $|U_1|\leq k-2$, we will show that $|\mathcal{K}_r(G)|<|\mathcal{K}_r(K_{k-1}\vee T_2(n-k+1))|$, a contradiction.
   Recall that $|\mathcal{K}_r(G')|=o(n^2)$.
   Moreover, since $G'$ is $F$-free, we have $e(G')\leq (n-s)^2/4$ by \cref{thm:simonovits}.  
   Hence, the $r$-cliques $R$ in graph $G-U_2$ can be divided into three cases:
   
\begin{itemize}

\item $|R\cap V(G')|\le 1$. The number of this kind of $r$-clique is $O(n)$.

\item $|R\cap V(G')|=2$. The number of this kind of $r$-clique is at most $\binom{k-2}{r-2}(n-s)^2/4$.

\item $|R\cap V(G')|\ge 3$. The number of this kind of $r$-clique is $o(n^2)$.

\end{itemize}

 By \cref{cl:11111}, the number of $r$-cliques that contain at least one vertex in $U_2$ is $O(n)$. 
 Hence,
\[ |\mathcal{K}_r(G)|\leq \binom{k-2}{r-2}(n-s)^2/4+o(n^2)<\binom{k-1}{r-2}(n-k+1)^2/4\le |\mathcal{K}_r(K_{k-1}\vee T_2(n-k+1))|\]
  when $n$ is sufficiently large. Here, $\binom{k-2}{r-2}<\binom{k-1}{r-2}$ because $r\le k+1$.
\end{proof}

\begin{claim}\label{cl:emptyset}
  $U_2=\varnothing$.
\end{claim}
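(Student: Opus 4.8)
The plan is to mimic the argument used for the ``$|U_1|\ge k$'' direction of \cref{cl:cl22222}, supplying one extra copy of $F$ that comes from the minimality of $s$. Assume for contradiction that $U_2\neq\emptyset$ and fix $u\in U_2$; then $u\notin U_1$, and since $|U_1|=k-1$ by \cref{cl:cl22222} we have $s=|U|\ge k$, so in particular $U_1\subseteq U\setminus\{u\}$. Write $U_1=\{w_1,\dots,w_{k-1}\}$. Because $s$ is minimal and $|U\setminus\{u\}|=s-1$, the graph $G-(U\setminus\{u\})$ is not $F$-free, so it contains a copy of $F$; and since $G'=G-U$ is $F$-free, every copy of $F$ in $G-(U\setminus\{u\})$ must use $u$. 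Fix one such copy $F_u$; it consists of $u$ together with exactly $|F|-1$ vertices of $G'$. The goal is to enlarge $F_u$ into a copy of $kF$ inside $G$ by adjoining $k-1$ further, pairwise-disjoint copies of $F$, one through each $w_i$, contradicting that $G$ is $kF$-free.

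The copies through $U_1$ come directly from the definition of $U_1$. Recall $T$ is the tree obtained from $F$ by deleting a root vertex. I would first check that whenever $w\in U_1$ and $T'$ is a copy of $T$ with $V(T')\subseteq N(w)\cap V(G')$, the subgraph $G[\{w\}\cup V(T')]$ contains a copy of $F$: put $w$ in the role of the deleted root, so that the only edges one must verify are those joining $w$ to the images of the former neighbours of that root, and these images are vertices of $T'$, hence neighbours of $w$ (this also covers the degenerate case in which one path of $F$ has length $1$, where the relevant image is the centre of the spider $T$ itself). By the definition of $U_1$, $N(w)\cap V(G')$ contains $(k-1)|F|+1$ pairwise vertex-disjoint copies of $T$, which is ample room.

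It then remains to choose everything disjointly, which I would do greedily: take $F_u$ first, and for $i=1,\dots,k-1$ choose inside $N(w_i)\cap V(G')$ a copy of $T$ that avoids all vertices of $G'$ used so far, and let $F_{w_i}$ be a copy of $F$ in $G[\{w_i\}\cup V(T')]$. At step $i$ at most $i(|F|-1)\le(k-1)(|F|-1)<(k-1)|F|+1$ vertices of $G'$ have been used, each blocking at most one of the $(k-1)|F|+1$ available disjoint copies of $T$ around $w_i$, so a valid choice always remains; moreover $V(F_{w_i})\subseteq\{w_i\}\cup V(G')$, so $F_{w_i}$ is automatically disjoint from $u,w_1,\dots,w_{k-1}$. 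Thus $F_u,F_{w_1},\dots,F_{w_{k-1}}$ are $k$ pairwise-disjoint copies of $F$ in $G$, the desired contradiction, so $U_2=\emptyset$. The step I expect to need the most care is the verification in the second paragraph that $F\subseteq G[\{w\}\cup V(T')]$ regardless of which root is deleted from $F$ and including the length-$1$ degenerate case; once that is settled, the rest is bookkeeping with the minimality of $s$ and the constant $(k-1)|F|+1$ built into the definition of $U_1$.
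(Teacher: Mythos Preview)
Your proposal is correct and follows essentially the same approach as the paper: the paper's proof is just a two-sentence sketch (``by the minimality of $U$, there is a copy of $F$ in $G-(U\setminus\{v\})$ containing $v$; since $|U_1|=k-1$ we can recursively find $k-1$ further disjoint copies of $F$''), and you have fleshed out precisely those two steps, including the greedy counting and the verification that $\{w\}\cup V(T')$ spans a copy of $F$.
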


\begin{proof}
  Suppose that $v\in U_2$.
  By the minimality of $U$, there is a copy of $F$ in $G-(U\backslash\{v\})$ that contains $v$.
  Since there are $k-1$ vertices in $U_1$ from \cref{cl:cl22222}, we can recursively find other $k-1$ disjoint copies of $F$ to form $kF$, a contradiction.
\end{proof}

  Combining \cref{cl:cl22222} and \cref{cl:emptyset}, we have $|U|=|U_1|=k-1$.
  Thus,
\begin{align*}\label{formula2}
  |\mathcal{K}_r(G)|
  &\leq \sum_{i=0}^{r} \binom{k-1}{i}|\mathcal{K}_{r-i}(G')|\notag\\
  &\leq \binom{k-1}{r} + \binom{k-1}{r-1}(n-k+1) + \binom{k-1}{r-2}e(G') + \sum_{i=0}^{r-3} \binom{k-1}{i}|\mathcal{K}_{r-i}(G')|\notag\\
  &\leq \binom{k-1}{r} + \binom{k-1}{r-1}(n-k+1) + \binom{k-1}{r-2}\left(\frac{1}{r-2}\sum_{i=0}^{r-3}(e(G') +  c_i|\mathcal{K}_{r-i}(G')|)\right)  \\
  &\leq \binom{k-1}{r} + \binom{k-1}{r-1}(n-k+1) + \binom{k-1}{r-2}\lfloor (n-k+1)^2/4\rfloor \quad \text{(by \cref{lmm:theta})}\notag\\
  &=|\mathcal{K}_r(K_{k-1}\vee T_2(n-k+1))|\notag
\end{align*}
  and equality holds if and only if $G=K_{k-1}\vee T_2(n-k+1)$, where $c_i=(r-2)\binom{k-1}{i}/\binom{k-1}{r-2}$.

\medskip
\noindent{\textbf{Case 2.}} $r\ge k+2.$

\vspace{0.2cm} 

Let $U_1$, $U_2$ and $G'=G-U_1-U_2$ be the same as in Case 1.
  By a similar analysis as in \cref{cl:11111} and \cref{cl:cl22222}, 
  we have 
\begin{itemize}
    \item for any $u\in U_2$, the number of $r$-cliques 
    containing $u$ is $O(n)$, and
    \item $|U_1|\le k-1$.
\end{itemize}
  Hence, there are $O(n)$ $r$-cliques containing at least one vertex from $U_2$.
  
  We now estimate the number of $r$-cliques in $G-U_2$.
  Note that $F$ does not contain $\Theta(1,2,2,3)$ as a subgraph.
  By \cref{thm:jun2} and \cref{lem:k-tree} (2), we have $\ex(n,K_i,F)=o(n^2)$ for any $i\ge 3$.
  Since $r\ge k+2$ and $|U_1|\le k-1$, each $r$-clique in $G-U_2$ has at least three vertices in $G'$.
  That is, the number of $r$-cliques in $G-U_2$ is at most 
  $$\sum\limits_{i=0}^{|U_1|}\binom{|U_1|}{i}|\mathcal{K}_{r-i}(G')|\le 2^{k-1}\sum\limits_{i=3}^{r}\ex(n,K_i,F)=o(n^2).$$
  Hence, $\ex(n,K_r,kF)=|\mathcal{K}_r(G)|=o(n^2)$.
\end{proof}

\section{Concluding remarks}

 The \textit{tree-width} of $G$, denoted by $tw(G)$, is one less than the size of the largest clique in the chordal graph containing $G$ with the smallest clique number. 
The maximal graphs with tree-width exactly $k$ are called $k$-trees, and a graph with tree-width at most $k$ is a subgraph of a $k$-tree (see \cref{ktree}).
  
 Regarding the function $\ex(n, F)$, it is known that $\ex(n, F)=\Theta(n^2)$ when $\chi(F) \geq 3$. 
 In particular, when $\chi(F)=2$, $\ex(n, F)=O(n)$ if and only if $F$ is a forest. 
 Alon and Shikheman \cite{alon2016many} proved that $\ex(n, K_{r},F)=\Theta(n^r)$ if and only if $\chi(F) \geq r+1$, and \cref{lem:k-tree} states that $\ex(n, K_{r},F)=O(n^{r-1})$ when $tw(F)\le r-1$. Perhaps the following is true.
\begin{problem}
  Is it true that if $\chi(F)=r\ge 3$ and $tw(F)\ge r$, then $\ex(n,K_r,F)=\Omega(n^{r-1})$?
\end{problem}

\paragraph{Acknowledgements}

The authors want to thank Suyun Jiang for helpful discussion. The authors are also grateful to Hong Liu and Oleg Pikhurko for careful reading and some writing suggestions.
The authors would also like to thank the anonymous referees for their careful reading and constructive comments.

\bibliographystyle{abbrv}
\bibliography{references.bib}
\end{document}